\documentclass{amsart}
\usepackage{amsmath,amscd,amsthm,amsfonts,amssymb}

\pagestyle{plain}

\theoremstyle{plain}
\newtheorem{theorem}                 {Theorem}      [section]
\newtheorem{proposition}  [theorem]  {Proposition}

\newtheorem{lemma}        [theorem]  {Lemma}
\newtheorem{conjecture}        [theorem]  {Conjecture}

\theoremstyle{definition}
\newtheorem{example}      [theorem]  {Example}

\newtheorem{definition}   [theorem]  {Definition}

\numberwithin{equation}{section}

\def \theo-intro#1#2 {\vskip .25cm\noindent{\bf Theorem #1\ }{\it #2}}

\newcommand{\trace}{\operatorname{trace}}

\def \zn{\mathbb Z}

\def \rn{\mathbb R}
\def \cn{\mathbb C}

\def \B{\mathcal B}

\def \F{\mathcal F}
\def \H{\mathcal H}

\def \L{\mathcal L}

\def \V{\mathcal V}

\def \ip #1#2{\langle #1,#2 \rangle}

\def \a{\mathfrak{a}}

\def \d{\mathfrak{d}}
\def \g{\mathfrak{g}}
\def \h{\mathfrak{h}}
\def \k{\mathfrak{k}}
\def \m{\mathfrak{m}}
\def \n{\mathfrak{n}}
\def \p{\mathfrak{p}}

\def \s{\mathfrak{s}}

\def \v{\mathfrak{v}}
\def \z{\mathfrak{z}}

\DeclareMathOperator{\ad}{ad}
\DeclareMathOperator{\Ad}{Ad}

\def \GLR#1{\text{\bf GL}_{#1}(\rn)}

\def \glr#1{\mathfrak{gl}_{#1}(\rn)}

\def \SO#1{\text{\bf SO}(#1)}

\def \SOO#1#2{\text{\bf SO}(#1,#2)}

\def \Spin#1{\text{\bf Spin}(#1)}

\def \SU#1{\text{\bf SU}(#1)}

\def \Sp#1{\text{\bf Sp}(#1)}

\def \Spp#1#2{\text{\bf Sp}(#1,#2)}

\def \nab#1#2{\hbox{$\nabla$\kern -.3em\lower 1.0 ex
    \hbox{$#1$}\kern -.1 em {$#2$}}}

\begin{document}
\baselineskip 22pt \larger

\allowdisplaybreaks

\title{Harmonic morphisms from solvable Lie groups}

\author{Sigmundur Gudmundsson}
\author{Martin Svensson}

\keywords{harmonic morphisms, minimal submanifolds, Lie groups}

\subjclass[2000]{58E20, 53C43, 53C12}

\address
{Department of Mathematics, Faculty of Science, Lund University,
Box 118, S-221 00 Lund, Sweden}
\email{Sigmundur.Gudmundsson@math.lu.se}

\address
{Department of Mathematics \& Computer Science, University of
Southern Denmark, Campusvej 55, DK-5230 Odense M, Denmark}
\email{svensson@imada.sdu.dk}

\dedicatory{Dedicated to the memory of Professor James Eells}

\begin{abstract}  In this paper we introduce two new methods for 
constructing harmonic morphisms from solvable Lie groups.  The first 
method yields global solutions from any simply connected nilpotent 
Lie group and from any Riemannian symmetric space of non-compact type 
and rank $r\ge 3$.  The second method provides us with global solutions
from any Damek-Ricci space and many non-compact Riemannian symmetric spaces.
We then give a continuous family of $3$-dimensional solvable Lie groups
not admitting any complex valued harmonic morphisms, not even locally.
\end{abstract}

\maketitle

\section{Introduction}

The notion of a minimal submanifold of a given ambient space is of
great importance in Riemannian geometry. Harmonic morphisms
$\phi:(M,g)\to(N,h)$ between Riemannian manifolds are useful tools
for the construction of such objects. They are solutions to
over-determined non-linear systems of partial differential
equations determined by the geometric data of the manifolds
involved. For this reason harmonic morphisms are difficult to find
and have no general existence theory, not even locally.

If the codomain is a surface the problem is invariant under
conformal changes of the metric on
$N^2$. Therefore, at least for local studies, the codomain can be
taken to be the complex plane with its standard flat metric. For
the general theory of harmonic morphisms between Riemannian
manifolds we refer to the self-contained book \cite{Bai-Woo-book}
and the regularly updated on-line bibliography \cite{Gud-bib}.

In \cite{Bai-Woo-1} Baird and Wood classify harmonic morphisms
from the famous $3$-dimensional homogeneous geometries to
surfaces. They construct a globally defined solution
$\phi:Nil\to\cn$ from the $3$-dimensional nilpotent Heisenberg
group and prove that the $3$-dimensional solvable group $Sol$ does
not admit any solutions, not even locally.

In this paper we introduce two new methods for constructing complex 
valued harmonic morphisms from solvable Lie groups.  We prove that 
any simply connected nilpotent Lie group can be equipped with 
left-invariant Riemannian metrics admitting globally
defined solutions.

Our methods yield a variety of harmonic morphisms from a large 
collection of solvable Lie groups for example all Damek-Ricci 
spaces.  We prove existence for most of the solvable 
Iwasawa groups $NA$ associated with Riemannian symmetric spaces.
Combining this with earlier results from \cite{Gud-Sve-1}, \cite{Gud-Sve-2} 
and \cite{Gud-Sve-3} we are able to prove the following conjecture except 
in the cases when the symmetric space is $G_2/\SO 4$ or its 
non-compact dual.

\begin{conjecture}\label{conj:existence}
Let $(M^m,g)$ be an irreducible Riemannian symmetric space of dimension 
$m\ge 2$.   For each point $p\in M$ there exists
a complex-valued harmonic morphism $\phi:U\to\cn$ defined on an open
neighbourhood $U$ of $p$. If the space $(M,g)$ is of non-compact
type then the domain $U$ can be chosen to be the whole of $M$.
\end{conjecture}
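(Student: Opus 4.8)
The plan is to prove the conjecture by transferring it, via duality and the Iwasawa decomposition of symmetric spaces, into the problem of constructing complex-valued harmonic morphisms from the associated solvable Lie groups, where the two methods of this paper and the earlier results apply. I would begin with two reductions. First, recall that $\phi:U\to\cn$ is a harmonic morphism precisely when $\tau(\phi)=0$ and $g(\nabla\phi,\nabla\phi)=0$, using the complex-bilinear extension of $g$; this is a real-analytic, purely local condition on the metric. Since an irreducible symmetric space of dimension $m\ge 2$ is of compact or non-compact type, and since a space of compact type shares a common complexification with its non-compact dual, any complex-valued harmonic morphism on the dual extends by analytic continuation to a local solution near each point of the compact space. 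This reduces the compact case to the non-compact one, where we must moreover produce a globally defined solution.

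For the non-compact type I would use the Iwasawa decomposition $G=NAK$ of the isometry group, which realises $M=G/K$ as the solvable Iwasawa group $NA$ equipped with a left-invariant metric. The task then becomes the construction of a globally defined complex-valued harmonic morphism from $NA$. Following the eigenfamily strategy, I would look for a collection $\{\phi_i\}$ of complex functions on $NA$ satisfying $\tau(\phi_i)=\lambda\phi_i$ and $g(\nabla\phi_i,\nabla\phi_j)=\mu\phi_i\phi_j$ for fixed constants $\lambda,\mu$, since from such an eigenfamily the harmonic-morphism equations can be solved explicitly. Writing the $\phi_i$ in terms of exponential coordinates on $N$ and $A$ and expressing $\tau$ and $g(\nabla\cdot,\nabla\cdot)$ through a left-invariant frame turns the eigenfamily conditions into algebraic relations among the restricted roots and the structure constants of $\mathfrak{n}\oplus\mathfrak{a}$.

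With this reduction I would dispose of the cases by rank. When the rank is $r\ge 3$ the first method applies directly and yields a global solution on $NA$, hence on $M$. The rank-one spaces are the real, complex and quaternionic hyperbolic spaces together with the Cayley hyperbolic plane, all of which are Damek-Ricci spaces and therefore covered by the second method. The remaining rank-two spaces are then handled by the second method together with the existence results of \cite{Gud-Sve-1}, \cite{Gud-Sve-2} and \cite{Gud-Sve-3}.

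The main obstacle is the rank-two case, and specifically the exceptional space $G_2/\SO 4$ and its non-compact dual. For these the restricted root system and the resulting structure of $\mathfrak{n}$ are too rigid for either the first method or the Damek-Ricci-type second method to produce a nonzero solution of the coupled harmonicity and horizontal-conformality system, so the algebraic relations above cannot be satisfied by the available constructions. Consequently these two spaces are exactly the cases I expect to remain open, and the bulk of the technical effort lies in verifying, space by space rather than by a single uniform construction, that the eigenfamily system does admit a solution in every non-exceptional rank-two case.
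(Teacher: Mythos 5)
Your overall strategy is the same as the paper's: reduce the compact case to the non-compact one by duality, identify a non-compact space with its solvable Iwasawa group $NA$, use the first construction for rank $r\ge 3$, the second construction for Damek--Ricci (in particular rank-one) spaces and for various rank-two spaces with a suitable root space, and fall back on \cite{Gud-Sve-1}, \cite{Gud-Sve-2}, \cite{Gud-Sve-3} for what remains. You also correctly identify $G_2/\SO 4$ and its dual as the surviving obstruction. But this means that, exactly like the paper, you have \emph{not} proved the statement: it is stated as a conjecture precisely because the $G_2/\SO 4$ case is open, and your final paragraph concedes this. A proposal that ends by declaring one case ``expected to remain open'' is a proof outline of a partial result, not a proof of the conjecture; you should either close that case or restate what you are actually proving (the conjecture for all irreducible symmetric spaces other than $G_2/\SO 4$ and its dual).

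Two smaller points of divergence from the paper. First, the ``eigenfamily'' equations $\tau(\phi_i)=\lambda\phi_i$, $\kappa(\phi_i,\phi_j)=\mu\phi_i\phi_j$ that you put at the centre of the argument are the mechanism of the earlier papers \cite{Gud-Sve-1}--\cite{Gud-Sve-3}; they are not how either of this paper's two constructions works. The first construction takes the group epimorphism $\Phi:G\to\rn^n$ onto the abelianization $\g/[\g,\g]$ and pairs it with isotropic vectors orthogonal to the trace vector $\xi=(\trace\ad_{X_1},\dots,\trace\ad_{X_n})$, producing genuinely harmonic (not merely eigen-) functions; the second projects $NA\to N/M$, where $M$ is built from all root spaces other than a chosen $\n_\beta\perp[\n,\n]$, and verifies horizontal homothety plus minimal fibres directly. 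Conflating these with the eigenfamily method obscures why the new constructions give \emph{global} solutions so cheaply. Second, your claim that all rank-one spaces are Damek--Ricci is slightly off for real hyperbolic space (whose Iwasawa nilpotent part is Abelian rather than generalized Heisenberg), though the second construction still applies there with $\m=0$; it is the construction, not the Damek--Ricci structure, that does the work.
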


In the last section of the paper, we study the local existence of
harmonic morphisms on 3-dimensional solvable Lie groups and prove
the following result. 
\theo-intro{13.6}{Let $\g$ be a 3-dimensional centerless, solvable Lie
algebra and $G$ a connected Lie group with Lie algebra $\g$ and a
left-invariant metric. Let $\V$ be a local conformal foliation 
by geodesics on $G$. Then $G$ has constant sectional curvature.}

In contrast to the nilpotent case, we find a continuous family of 
solvable Lie groups not admitting any complex valued harmonic morphisms, 
independent of their left-invariant metric.  This family contains the 
famous example $Sol$. 

\section{Harmonic Morphisms}

Let $M$ and $N$ be two manifolds of dimensions $m$ and $n$,
respectively. A Riemannian metric $g$ on $M$ gives rise to the
notion of a {\it Laplacian} on $(M,g)$ and real-valued {\it
harmonic functions} $f:(M,g)\to\rn$. This can be generalized to
the concept of {\it harmonic maps} $\phi:(M,g)\to (N,h)$ between
Riemannian manifolds, which are solutions to a semi-linear system
of partial differential equations, see \cite{Bai-Woo-book}.

\begin{definition}
  A map $\phi:(M,g)\to (N,h)$ between Riemannian manifolds is
  called a {\it harmonic morphism} if, for any harmonic function
  $f:U\to\rn$ defined on an open subset $U$ of $N$ with $\phi^{-1}(U)$
non-empty,
  $f\circ\phi:\phi^{-1}(U)\to\rn$ is a harmonic function.
\end{definition}

The following characterization of harmonic morphisms between
Riemannian manifolds is due to Fuglede and Ishihara.  For the
definition of horizontal (weak) conformality we refer to
\cite{Bai-Woo-book}.

\begin{theorem}\cite{Fug-1,Ish}
  A map $\phi:(M,g)\to (N,h)$ between Riemannian manifolds is a
  harmonic morphism if and only if it is a horizontally (weakly)
  conformal harmonic map.
\end{theorem}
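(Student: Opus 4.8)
The plan is to reduce the equivalence to the second-order chain rule relating the three quantities involved. For a function $f:U\to\rn$ on an open set $U\subseteq N$ with $\phi^{-1}(U)\neq\emptyset$ I would use the composition formula
\[
\Delta_M(f\circ\phi)=\trace(S_\phi\circ\nabla df)+df(\tau(\phi)),
\]
where $\tau(\phi)$ is the tension field of $\phi$ (so that $\tau(\phi)=0$ means $\phi$ is harmonic), $\nabla df$ is the Hessian of $f$ regarded as an $h$-self-adjoint endomorphism of $TN$ (with $\trace\nabla df=\Delta_N f$), and $S_\phi=d\phi\circ d\phi^{*}$ is the $h$-self-adjoint, positive semi-definite endomorphism of $TN$ built from $d\phi$ and its metric adjoint $d\phi^{*}$. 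This is just $\tau(f\circ\phi)=df(\tau(\phi))+\trace_g\nabla df(d\phi,d\phi)$ specialised to real-valued $f$, rewritten via $\sum_i\nabla df(d\phi(e_i),d\phi(e_i))=\trace(S_\phi\circ\nabla df)$ for a local orthonormal frame $\{e_i\}$ on $M$. The crucial point is that horizontal weak conformality of $\phi$ is exactly the condition $S_\phi=\lambda^2\,\mathrm{id}_{TN}$ for some $\lambda\ge 0$.

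The sufficiency direction is then a one-line computation. If $\phi$ is harmonic and horizontally weakly conformal, then $\tau(\phi)=0$ and $S_\phi=\lambda^2\,\mathrm{id}$, so for every harmonic $f$,
\[
\Delta_M(f\circ\phi)=\trace(\lambda^2\,\nabla df)=\lambda^2\,\Delta_N f=0,
\]
whence $f\circ\phi$ is harmonic and $\phi$ is a harmonic morphism.

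For the converse I would argue at a single point. Fix $p\in M$ with image $q=\phi(p)$; the hypothesis forces $\trace(S_\phi(p)\circ\nabla df_q)+df_q(\tau(\phi)(p))=0$ for every locally defined harmonic $f$. The essential and hardest ingredient is a local realisation lemma for the $2$-jets of harmonic functions: for any covector $\omega\in T_q^*N$ and any $h$-self-adjoint trace-free endomorphism $H$ of $T_qN$ there exists $f$ with $\Delta_N f=0$ near $q$, $df_q=\omega$ and $\nabla df_q=H$. Since every harmonic function satisfies $\trace\nabla df=\Delta_N f=0$, trace-freeness of $H$ is the only constraint, and the lemma is obtained by realising the flat model (a linear term plus a trace-free harmonic-quadratic polynomial in normal or harmonic coordinates, whose flat Laplacian vanishes at $q$ precisely because $H$ is trace-free) and then correcting the curvature error by solving $\Delta_N v=-\Delta_N P$ with $v$ having vanishing $2$-jet at $q$.

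Granting the lemma, I would first set $H=0$ and let $\omega$ range over $T_q^*N$, forcing $df_q(\tau(\phi)(p))=0$ for all $\omega$ and hence $\tau(\phi)(p)=0$, so that $\phi$ is harmonic. The identity then collapses to $\trace(S_\phi(p)\circ H)=0$ for every trace-free self-adjoint $H$; since the Hilbert--Schmidt orthogonal complement of the trace-free self-adjoint endomorphisms inside the self-adjoint ones is $\rn\cdot\mathrm{id}$, this forces $S_\phi(p)=\lambda^2(p)\,\mathrm{id}$ with $\lambda^2(p)\ge 0$ by positive semi-definiteness of $S_\phi$. Hence $\phi$ is horizontally weakly conformal, and the theorem follows. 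The genuine difficulty is concentrated entirely in the jet-realisation lemma; once it is available both implications are immediate, the second being pure linear algebra on the space of self-adjoint endomorphisms.
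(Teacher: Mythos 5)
The paper does not prove this statement at all: it is quoted with the citations to Fuglede and Ishihara, so there is no in-paper argument to compare yours against. What you have written is, in outline, exactly the classical proof from those sources (and from Baird--Wood, Chapter 4): the composition law $\Delta_M(f\circ\phi)=\trace(S_\phi\circ\nabla df)+df(\tau(\phi))$ with $S_\phi=d\phi\circ d\phi^*$, the identification of horizontal weak conformality with $S_\phi=\lambda^2\,\mathrm{id}_{TN}$, the one-line sufficiency direction, and the reduction of necessity to the jet-realisation lemma followed by the linear-algebra observation that $\rn\cdot\mathrm{id}$ is the Hilbert--Schmidt orthogonal complement of the trace-free self-adjoint endomorphisms. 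All of that is correct, and you have correctly located the entire difficulty in the lemma.

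The one step that does not hold up as literally written is inside that lemma: you cannot simply ``solve $\Delta_N v=-\Delta_N P$ with $v$ having vanishing $2$-jet at $q$.'' Local solvability of the Poisson equation produces \emph{some} solution, but demanding that its $2$-jet at $q$ vanish is equivalent to producing a local harmonic function with the $2$-jet of $P$ --- which is precisely the statement being proved, so the correction step is circular as phrased. The standard repair: in normal coordinates $\Delta_N P$ vanishes at $q$ (the covariant and coordinate Hessians agree there and $\trace H=0$); solve the Dirichlet problem $\Delta_N v_\epsilon=-\Delta_N P$ on the geodesic ball of radius $\epsilon$ with zero boundary data, and a rescaled Schauder estimate shows the $2$-jet of $v_\epsilon$ at $q$ is $O(\epsilon)$. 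Since the set of $2$-jets at $q$ of local harmonic functions is a finite-dimensional, hence closed, linear subspace, the limiting jet $(0,\omega,H)$ is actually attained. With that patch (or any equivalent device, e.g.\ Cauchy--Kovalevskaya in the analytic case) your argument is complete and coincides with the proof in the cited references.
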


The following result of Baird and Eells gives the theory of
harmonic morphisms a strong geometric flavour and shows that the
case when $n=2$ is particularly interesting. The conditions
characterizing harmonic morphisms are then independent of
conformal changes of the metric on the surface $N^2$.

\begin{theorem}\cite{Bai-Eel}\label{theo:B-E}
Let $\phi:(M^m,g)\to (N^n,h)$ be a horizontally (weakly) conformal
map between Riemannian manifolds. If
\begin{enumerate}
\item[i.] $n=2$, then $\phi$ is harmonic if and only if $\phi$ has
minimal fibres at regular points; \item[ii.] $n\ge 3$, then two of the following
conditions imply the other:
\begin{enumerate}
\item $\phi$ is a harmonic map, \item $\phi$ has minimal fibres at regular points,
\item $\phi$ is horizontally homothetic.
\end{enumerate}
\end{enumerate}
\end{theorem}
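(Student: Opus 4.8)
The plan is to reduce the statement to an Einstein-type condition and then exploit the rigidity of left-invariant metrics on the very restricted family of centerless solvable three-dimensional Lie algebras. First I would recall that in dimension three the Weyl tensor vanishes identically, so the full curvature tensor is algebraically determined by the Ricci tensor; by Schur's lemma it therefore suffices to prove that at each point the Ricci tensor is proportional to the metric, i.e. that the (left-invariant, hence pointwise constant) Ricci operator of $G$ is a scalar multiple of the identity. To access the foliation, choose a local orthonormal frame $\{V,X,Y\}$ adapted to $\V$, with $V$ a unit section of $\V$ and $X,Y$ spanning $\H=\V^\perp$. Since the leaves are geodesics, $\nabla_VV=0$, and $\nabla_WV\in\H$ for every $W$; conformality of the foliation is exactly the condition that the symmetric part of the endomorphism $B\colon\H\to\H$, $BW=\nabla_WV$, be a multiple of the identity. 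Hence
$$\nabla_XV=\kappa X+\tau Y,\qquad \nabla_YV=-\tau X+\kappa Y,$$
for functions $\kappa$ (the dilation) and $\tau$ (the twist), and $g([X,Y],V)=-2\tau$, so that $\H$ is integrable precisely when $\tau\equiv0$. Note also that a one-dimensional geodesic fibre is automatically minimal, so by Theorem~\ref{theo:B-E}(i) the local leaf projection is genuinely a harmonic morphism to a surface; this lets me import the associated integrability conditions if needed.

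Next I would extract curvature identities from these structure equations. The decisive one comes from the Riccati equation of the geodesic congruence $V$: writing $R_VW=R(W,V)V$, one has $\nabla_VB+B^2+R_V=0$ on $\H$, and since $B=\kappa\,\mathrm{Id}+\tau J$ (with $J$ the rotation $X\mapsto Y\mapsto -X$) its symmetric-traceless part, the shear, vanishes by conformality. Taking the trace gives
$$\mathrm{Ric}(V,V)=-2\,V(\kappa)-2\kappa^2+2\tau^2.$$
Complementing this with the Gauss and Codazzi equations of the (possibly non-integrable) horizontal distribution, and with metric compatibility to fill in the remaining connection coefficients $\nabla_XX,\nabla_XY,\nabla_YX,\nabla_YY,\nabla_VX,\nabla_VY$, I would compute the three sectional curvatures $K(V,X),K(V,Y),K(X,Y)$ and the off-diagonal Ricci entries, expressing the whole Ricci operator in terms of $\kappa,\tau$ and their frame derivatives. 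Conformality forces the two vertical sectional curvatures to combine symmetrically, which is what ultimately pushes the Ricci operator towards isotropy.

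I would then feed in the algebraic input. A three-dimensional solvable Lie algebra is centerless exactly when it has the normal form $\g=\rn\ltimes_A\rn^2$ with $A\in\GLR2$ acting on the abelian derived ideal $[\g,\g]=\rn^2$; this is the family containing $Sol$. For such a left-invariant metric the Ricci operator is a fixed symmetric endomorphism of $\g$ with constant eigenvalues, computable from $A$ and the inner product by the standard Milnor-type formulas. Matching these constants against the frame-dependent expressions from the previous step yields differential constraints on $\kappa$ and $\tau$; homogeneity, meaning that the eigenvalues cannot vary, is what rigidifies the system and forces the three eigenvalues to coincide.

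The main obstacle is precisely this reconciliation, because the foliation $\V$ is only local and is not assumed left-invariant, so $\kappa$ and $\tau$ are honest functions rather than constants. I expect to split into two cases. If $\tau\equiv0$ the horizontal distribution is integrable, its leaves are surfaces, and I would analyze the induced second fundamental form against the solvable bracket relations to conclude that the horizontal and vertical sectional curvatures agree. If $\tau\neq0$ the twist identity above, together with the Codazzi relation for $\tau$, over-determines the system on the two-dimensional abelian ideal and again forces $\kappa^2+\tau^2$ and the sectional curvatures into the isotropic configuration. In both cases one arrives at $\mathrm{Ric}=f\,g$ pointwise, whence Schur's lemma and the vanishing of the Weyl tensor in dimension three give that $G$ has constant sectional curvature.
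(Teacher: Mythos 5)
Your proposal does not address the statement you were asked to prove. The statement is the Baird--Eells theorem (Theorem~\ref{theo:B-E}): for a horizontally (weakly) conformal map $\phi:(M^m,g)\to(N^n,h)$ between \emph{arbitrary} Riemannian manifolds, harmonicity is equivalent to minimality of the fibres when $n=2$, and for $n\ge 3$ any two of harmonicity, minimality of the fibres, and horizontal homothety imply the third. What you have written is instead an outline of a proof of Theorem~\ref{theo:solv}, the result about $3$-dimensional centerless solvable Lie groups with a local conformal foliation by geodesics having constant sectional curvature: everything in your argument --- the Einstein condition via Schur's lemma, the adapted frame $\{V,X,Y\}$ with dilation $\kappa$ and twist $\tau$, the Riccati equation for the geodesic congruence, the normal form $\g=\rn\ltimes_A\rn^2$ and Milnor's curvature formulas --- belongs to that Lie-theoretic setting and has no bearing on a general horizontally conformal map between manifolds of arbitrary dimension. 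Indeed you even invoke Theorem~\ref{theo:B-E}(i) as a known input inside your argument, which makes the mismatch explicit: you cannot cite the statement you are supposed to be proving.

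For the record, the paper does not prove Theorem~\ref{theo:B-E} either; it is quoted from the reference \cite{Bai-Eel}. The standard proof rests on the fundamental formula for the tension field of a horizontally conformal map with dilation $\lambda$, which on the set of regular points decomposes $\tau(\phi)$ into two terms: one proportional to $(n-2)\,d\phi(\mathrm{grad}_{\H}\ln\lambda)$ and one proportional to $(m-n)\,d\phi(\mu^{\V})$, where $\mu^{\V}$ is the mean curvature vector of the fibres. When $n=2$ the first term vanishes identically and, since $d\phi$ is injective on the horizontal distribution at regular points, $\tau(\phi)=0$ is equivalent to $\mu^{\V}=0$; when $n\ge 3$ the same injectivity shows that the vanishing of any two of the three quantities $\tau(\phi)$, $\mu^{\V}$, $\mathrm{grad}_{\H}\lambda$ forces the vanishing of the third. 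If you intend to supply a proof of Theorem~\ref{theo:B-E}, you need to establish that decomposition of the tension field; none of the material in your current proposal contributes to it.
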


In this paper we are interested in complex valued functions
$\phi,\psi:(M,g)\to\cn$ from Riemannian manifolds. In this
situation the metric $g$ induces the complex-valued Laplacian
$\tau(\phi)$ and the gradient $\text{grad}(\phi)$ with values in
the complexified tangent bundle $T^{\cn}M$ of $M$.  We extend the
metric $g$ to be complex bilinear on $T^{\cn} M$ and define the
symmetric bilinear operator $\kappa$ by
$$\kappa(\phi,\psi)= g(\text{grad}(\phi),\text{grad}(\psi)).$$ Two
maps $\phi,\psi: M\to\cn$ are said to be {\it orthogonal} if
$$\kappa(\phi,\psi)=0.$$  The harmonicity and horizontal
conformality of $\phi:(M,g)\to\cn$ are expressed by the relations
$$\tau(\phi)=0\ \ \text{and}\ \ \kappa(\phi,\phi)=0.$$

\begin{definition}  Let $(M,g)$ be a Riemannian
manifold.  Then a set $$\Omega=\{\phi_k:M\to\cn\ |\ k\in I\}$$ of
complex-valued functions is said to be an {\it orthogonal harmonic
family} on $M$ if, for all $\phi,\psi\in\Omega$,
$$\tau(\phi)=0\ \ \text{and}\ \ \kappa(\phi,\psi)=0.$$
\end{definition}

The problem of finding an orthogonal harmonic family on a Riemannian
manifold can often be reduced to finding a harmonic morphism with values
in $\rn^n$.

\begin{proposition} Let $\Phi:(M,g)\to \rn^n$ be a harmonic morphism
from a Riemannian manifold to the standard Euclidean
$\rn^n$ with $n\ge 2$.  If $V$ is an isotropic subspace of $\cn^n$ then
$$\Omega_V=\{\phi_v(x)=\ip {\Phi(x)}v|\ v\in V\}$$ is an orthogonal harmonic
family of complex valued functions on $(M,g)$.
\end{proposition}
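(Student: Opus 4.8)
The plan is to push everything down to the real component functions $\Phi_1,\dots,\Phi_n$ of $\Phi$ and then exploit the complex bilinearity of both $\tau$ and $\kappa$. Writing $v=(v_1,\dots,v_n),\,w=(w_1,\dots,w_n)\in V$, we have $\phi_v=\sum_k v_k\Phi_k$ and $\phi_w=\sum_l w_l\Phi_l$, so it suffices to understand $\tau(\Phi_k)$ and $\kappa(\Phi_k,\Phi_l)$ and then reassemble.

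For harmonicity, I would note that each coordinate function $x_k:\rn^n\to\rn$ is harmonic, being linear on flat Euclidean space. The defining property of a harmonic morphism then forces each $\Phi_k=x_k\circ\Phi$ to be harmonic on $(M,g)$, i.e.\ $\tau(\Phi_k)=0$. Since the complex-valued Laplacian extends $\cn$-linearly, $\tau(\phi_v)=\sum_k v_k\,\tau(\Phi_k)=0$, giving the first required condition.

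For the orthogonality, expanding $\kappa$ by the bilinearity of the complexified metric gives $\kappa(\phi_v,\phi_w)=\sum_{k,l}v_kw_l\,\kappa(\Phi_k,\Phi_l)$. The crucial input is that, being a harmonic morphism into flat $\rn^n$, the map $\Phi$ is by the Fuglede--Ishihara characterization horizontally weakly conformal; for a map into Euclidean space this is exactly the statement that its components obey $\kappa(\Phi_k,\Phi_l)=\lambda^2\delta_{kl}$ for the dilation $\lambda\ge 0$. Substituting yields $\kappa(\phi_v,\phi_w)=\lambda^2\sum_k v_kw_k=\lambda^2\,\ip vw$, which vanishes for all $v,w\in V$ precisely because $V$ is isotropic. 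Taking $v=w$ recovers the horizontal conformality $\kappa(\phi_v,\phi_v)=0$ of each member, while distinct $v,w$ give mutual orthogonality; together with $\tau(\phi_v)=0$ this shows $\Omega_V$ is an orthogonal harmonic family.

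The computation is short, so the only step deserving care is the componentwise form of horizontal weak conformality, $\kappa(\Phi_k,\Phi_l)=\lambda^2\delta_{kl}$, which encodes that the gradients of the components are mutually orthogonal and of equal length. Once this standard fact is in hand the conclusion is immediate, and I anticipate no real obstacle beyond bookkeeping with the complex-bilinear extension of $g$.
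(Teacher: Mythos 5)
Your proof is correct: the paper states this proposition without any proof, and your argument — harmonicity of each component $\Phi_k=x_k\circ\Phi$ from the defining property of a harmonic morphism (or equivalently from Fuglede--Ishihara), plus the componentwise form $\kappa(\Phi_k,\Phi_l)=\lambda^2\delta_{kl}$ of horizontal weak conformality for a Euclidean target, followed by bilinearity and isotropy of $V$ — is exactly the standard argument the authors are implicitly relying on. No gaps; the only point worth stating explicitly is that an isotropic subspace satisfies $\ip vw=0$ for all $v,w\in V$ (not merely $\ip vv=0$), which follows by polarization since the form is symmetric.
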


Here and elsewhere in this paper $\ip\cdot\cdot$ refers to the standard
symmetric bilinear form on the $n$-dimensional complex linear space $\cn^n$.
The next result shows that orthogonal harmonic families can be useful
for producing a variety of harmonic morphisms.

\begin{theorem}\cite{Gud-Sve-1}\label{theo:local-sol}
Let $(M,g)$ be a Riemannian manifold and
$$\Omega=\{\phi_k:M\to\cn\ |\ k=1,\dots ,n\}$$ be a finite orthogonal
harmonic family on $(M,g)$.  Let $\Phi:M\to\cn^n$ be the map given
by $\Phi=(\phi_1,\dots,\phi_n)$ and $U$ be an open subset of
$\cn^n$ containing the image $\Phi(M)$ of $\Phi$.  If $\tilde\F$
is a family of holomorphic functions $F:U\to\cn$ then the family
$\F$ given by
$$\F=\{\psi:M\to\cn\ |\ \psi=F(\phi_1,\dots ,\phi_n),\ F\in\tilde\F\}$$
is an orthogonal harmonic family on $M$.
\end{theorem}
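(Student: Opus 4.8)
The plan is to reduce the entire statement to two composition identities—a first-order chain rule for the gradient and the corresponding second-order formula for the complex Laplacian—and then to invoke the hypotheses on $\Omega$ directly. Since every member of $\F$ has the form $\psi=F(\phi_1,\dots,\phi_n)$ with $F:U\to\cn$ holomorphic, I would work in local coordinates on $M$ and differentiate $\psi$ by the chain rule, treating $F$ as a function of the complex variables $z_1,\dots,z_n$. The decisive feature throughout is that holomorphicity forces $\partial F/\partial\bar z_j=0$, so every antiholomorphic contribution drops out and only the holomorphic derivatives $\partial F/\partial z_j$ and $\partial^2 F/\partial z_j\partial z_k$ survive.

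First I would establish the gradient formula. One application of the chain rule gives
$$\text{grad}(\psi)=\sum_{j=1}^n\frac{\partial F}{\partial z_j}(\Phi)\,\text{grad}(\phi_j),$$
with no $\text{grad}(\bar\phi_j)$ terms, precisely because $F$ is holomorphic. Taking two such expressions $\psi=F(\Phi)$ and $\tilde\psi=\tilde F(\Phi)$ and pairing them with the complex-bilinear extension of $g$ then yields
$$\kappa(\psi,\tilde\psi)=\sum_{j,k=1}^n\frac{\partial F}{\partial z_j}\,\frac{\partial\tilde F}{\partial z_k}\,\kappa(\phi_j,\phi_k).$$
Since $\Omega$ is an orthogonal harmonic family, every $\kappa(\phi_j,\phi_k)$ vanishes, so $\kappa(\psi,\tilde\psi)=0$. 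This settles the orthogonality condition for all pairs in $\F$, including the horizontal-conformality requirement $\kappa(\psi,\psi)=0$ in the case $\psi=\tilde\psi$.

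Next I would treat harmonicity. Applying the Laplacian to $\psi$, using the gradient formula above, and applying the chain rule a second time to the coefficients $\partial F/\partial z_j$ produces the composition identity
$$\tau(\psi)=\sum_{j=1}^n\frac{\partial F}{\partial z_j}\,\tau(\phi_j)+\sum_{j,k=1}^n\frac{\partial^2 F}{\partial z_j\partial z_k}\,\kappa(\phi_j,\phi_k),$$
where the first sum gathers the first-order terms and the second gathers the terms in which the metric contracts two factors of $d\phi$. By hypothesis each $\tau(\phi_j)=0$ and each $\kappa(\phi_j,\phi_k)=0$, so both sums vanish and $\tau(\psi)=0$.

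Together these computations show that $\F$ is an orthogonal harmonic family. The one step requiring genuine care—what I regard as the technical heart rather than a real obstacle—is the derivation of the second-order composition formula for $\tau$ in the complex-valued setting: I must check that the holomorphicity of $F$ eliminates precisely the antiholomorphic and mixed second derivatives, leaving exactly the term $\sum_{j,k}(\partial^2 F/\partial z_j\partial z_k)\,\kappa(\phi_j,\phi_k)$. Once that identity is in place, the vanishing of both $\tau(\psi)$ and $\kappa(\psi,\tilde\psi)$ is an immediate consequence of the defining properties of $\Omega$.
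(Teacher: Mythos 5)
Your argument is correct, and it is essentially the standard proof of this result: the paper itself gives no proof here (the theorem is quoted from \cite{Gud-Sve-1}), and the proof there rests on exactly the two composition identities you derive, namely $\kappa(F\circ\Phi,\tilde F\circ\Phi)=\sum_{j,k}\partial_{z_j}F\,\partial_{z_k}\tilde F\,\kappa(\phi_j,\phi_k)$ and $\tau(F\circ\Phi)=\sum_j\partial_{z_j}F\,\tau(\phi_j)+\sum_{j,k}\partial_{z_j}\partial_{z_k}F\,\kappa(\phi_j,\phi_k)$, with holomorphy of $F$ killing all antiholomorphic and mixed terms. The step you flag as the technical heart is indeed the only place requiring care, and it goes through exactly as you describe.
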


\section{The General Linear Group $\GLR n$}

Let $\GLR n$ be the general linear group equipped with its
standard Riemannian metric induced by the Euclidean scalar product
on the Lie algebra $\glr n$ given by
$$(X,Y)=\trace XY^t.$$  For $1\le i,j,k,l\le n$ we shall by
$E_{ij}$ denote the elements of $\glr n$ satisfying
$$(E_{ij})_{kl}=\delta_{ik}\delta_{jl}
\ \ \text{and}\ \ E_{ij}E_{kl}=\delta_{jk}E_{il}.$$

Let $G$ be a subgroup of $\GLR n$ with Lie algebra $\g$ equipped
with the induced Riemannian metric $g$.  If $X\in\g$ is a
left-invariant vector field on $G$ and $\phi,\psi:U\to\cn$ are two
complex valued functions defined locally on $G$ then
$$X(\phi)(p)=\frac {d}{ds}[\phi(p\cdot\exp(sX))]\big|_{s=0},$$
$$X^2(\phi)(p)=\frac {d^2}{ds^2}[\phi(p\cdot\exp(sX))]\big|_{s=0}.$$
This means that the operator $\kappa$ is given by
$$\kappa(\phi,\psi)=\sum_{X\in\B}X(\phi)X(\psi),$$ where $\B$ is
any orthonormal basis for the Lie algebra $\g$. Employing  the
Koszul formula for the Levi-Civita $\nabla$ connection on $G$ we
see that
\begin{eqnarray*}
g(\nab XX,Y)&=&g([Y,X],X)\\
&=&\trace (YX-XY)X^t\\
&=&\trace Y(XX^t-X^tX)^t\\
&=&g([X,X^t],Y).
\end{eqnarray*}
Let $[X,X^t]_\g$ be the orthogonal projection of the bracket
$[X,X^t]$ onto $\g$ in $\glr n$.  Then the above calculation shows
that $$\nab XX=[X,X^t]_\g$$ so the Laplacian $\tau$ satisfies
$$\tau(\phi)=\sum_{X\in\B}X^2(\phi)-[X,X^t]_\g(\phi).$$

\section{The Nilpotent Lie Group $N_n$}

The standard example of a nilpotent Lie group is the subgroup
$N_n$ of $\GLR n$ consisting of $n\times n$ upper-triangular
unipotent matrices
$$N_n=\Bigg\{\begin{pmatrix}
        1 & x_{12} & \cdots & x_{1,n-1}& x_{1n}
\\      0 &      1 & \ddots &          & \vdots
\\ \vdots & \ddots & \ddots &   \ddots & \vdots
\\ \vdots &        & \ddots &        1 & x_{n-1,n}
\\      0 & \cdots & \cdots &        0 & 1\end{pmatrix}
\in\GLR n \ |\ x_{ij}\in\rn\Bigg\}.$$ This inherits a natural
left-invariant Riemannian metric from $\GLR n$. The Lie algebra
$\n_n$ of $N_n$ has the canonical orthonormal basis $\B=\{E_{rs}|\
r<s\}$ and its Levi-Civita connection $\nabla$ satisfies
$$\nab{E_{rs}}{E_{rs}}=[E_{rs},E_{rs}^t]_{\n_n}=(E_{rr}-E_{ss})_{\n_n}=0$$
for all $E_{rs}\in\B$.  Hence the Laplacian $\tau$ is given by
$$\tau(\phi)=\sum_{r<s}E_{rs}^2(\phi).$$

\begin{lemma}\label{lemm:nilpotent0}
Let $x_{ij}:N_n\to\rn$ be the real valued coordinate functions
$$x_{ij}:x\mapsto e_i\cdot x\cdot e_j^t$$ where $\{e_1,\dots
,e_n\}$ is the canonical basis for $\rn^n$. If $i<j$ then the
following relations hold
$$\tau(x_{ij})=0\ \ \text{and}\ \
\kappa(x_{ij},x_{kl})=\delta_{jl}\cdot\hskip
-.5cm\sum_{\max\{i,k\}\le r<l}\hskip -.5cm x_{ir}x_{kr}.$$
\end{lemma}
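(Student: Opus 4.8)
The plan is to compute directly, using the exponential-curve formulas for $X(\phi)$ and $X^2(\phi)$ recorded in Section 3, the action of each basis field $E_{rs}\in\B$ on the coordinate functions $x_{ij}$, and then to assemble $\tau$ and $\kappa$ from these. The whole argument rests on a single first-order computation; everything else is bookkeeping.

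First I would exploit that every basis element $E_{rs}\in\B$ has $r<s$, so that $E_{rs}E_{rs}=\delta_{sr}E_{rs}=0$ and the exponential truncates to $\exp(sE_{rs})=I+sE_{rs}$. Hence $p\cdot\exp(sE_{rs})=p+s\,pE_{rs}$, and reading off the $(i,j)$-entry gives $x_{ij}(p\cdot\exp(sE_{rs}))=p_{ij}+s\,p_{ir}\delta_{sj}$. Differentiating once at $s=0$ produces the key identity
$$E_{rs}(x_{ij})=\delta_{sj}\,x_{ir}.$$
Since the curve value is affine in $s$, the second derivative vanishes, so $E_{rs}^2(x_{ij})=0$ for each basis element, and summing over $\B$ yields $\tau(x_{ij})=\sum_{r<s}E_{rs}^2(x_{ij})=0$. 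This settles the first assertion at once.

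For the second assertion I would substitute the key identity into $\kappa(x_{ij},x_{kl})=\sum_{r<s}E_{rs}(x_{ij})\,E_{rs}(x_{kl})=\sum_{r<s}\delta_{sj}\,x_{ir}\,\delta_{sl}\,x_{kr}$. The two Kronecker deltas force $s=j=l$, which produces the overall factor $\delta_{jl}$ and collapses the summation over $s$, leaving $\delta_{jl}\sum_{r<l}x_{ir}x_{kr}$. The remaining and only slightly delicate step is to trim this range using the strict upper-triangular unipotent structure of $N_n$: the entries obey $x_{ir}=0$ for $r<i$ and $x_{kr}=0$ for $r<k$, so a summand $x_{ir}x_{kr}$ survives only when $r\ge\max\{i,k\}$. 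Discarding the vanishing terms converts the sum into $\delta_{jl}\sum_{\max\{i,k\}\le r<l}x_{ir}x_{kr}$, exactly as claimed.

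I do not expect a genuine obstacle here: the computation is elementary once one notices $E_{rs}^2=0$. The only place demanding attention is the last index-range reduction, where one must track which coordinate entries are structurally zero (below the diagonal) versus identically one (on the diagonal) in $\n_n$; getting the lower summation limit $\max\{i,k\}$ right depends on handling both constraints $r\ge i$ and $r\ge k$ simultaneously.
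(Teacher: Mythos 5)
Your proposal is correct and follows essentially the same route as the paper: both compute $E_{rs}(x_{ij})=\delta_{sj}x_{ir}$ and $E_{rs}^2(x_{ij})=0$ from the truncated exponential (the paper writes this as $e_i\cdot x\cdot E_{rs}\cdot e_j^t$), then collapse the double Kronecker delta and trim the summation range using the vanishing of the below-diagonal entries. Your explicit justification of the lower limit $\max\{i,k\}$ is a step the paper performs silently; otherwise the two arguments coincide.
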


\begin{proof}
For an element $X$ of the Lie algebra $\n_n$ we have
$$X(x_{ij}):x\mapsto e_i\cdot x\cdot X\cdot e_j^t\ \ \text{and}
\ \  X^2(x_{ij}):x\mapsto e_i\cdot x\cdot X^2\cdot e_j^t.$$ This
leads to the following
$$\tau(x_{ij})=\sum_{r<s}E_{rs}^2(x_{ij})=
\sum_{r<s}e_i\cdot x\cdot E_{rs}^2\cdot e_j^t=0,$$
\begin{eqnarray*}
\kappa(x_{ij},x_{kl})&=&\sum_{r<s}E_{rs}(x_{ij})E_{rs}(x_{kl})\\
&=&\sum_{r<s}e_i\cdot x\cdot E_{rs}\cdot e_j^t
\cdot e_l\cdot E_{rs}^t\cdot x^t\cdot e_k^t\\
&=&\sum_{r<s}e_i\cdot x\cdot E_{rs}\cdot E_{jl}\cdot
E_{rs}^t\cdot x^t\cdot e_k^t\\
&=&\sum_{r<s}\delta_{sj}\delta_{sl}\ e_i\cdot x\cdot E_{rr}\cdot
x^t\cdot
e_k^t\\
&=&\delta_{jl}\sum_{r<l}e_i\cdot x\cdot e_r^t\cdot e_r\cdot x^t\cdot e_k^t\\
&=&\delta_{jl}\cdot\hskip -.5cm\sum_{\max\{i,k\}\le r<l}\hskip
-.5cmx_{ir}x_{kr}.
\end{eqnarray*}
\end{proof}

\begin{theorem}\label{theo:nilpotent0}
Let $N_n$ be the nilpotent Lie group of real $n\times n$
upper-triangular unipotent matrices equipped with its standard
Riemannian metric.  Then the group epimorphism $\Phi:N_n\to\rn^{n-1}$
defined by $$\Phi(x)=(x_{12},\dots,x_{n-1,n})$$
is a harmonic morphisms.
\end{theorem}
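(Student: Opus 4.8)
The plan is to check the two defining analytic conditions for a harmonic morphism into Euclidean space and then invoke the characterisation of Fuglede and Ishihara. Writing the components as $\phi_i=x_{i,i+1}$ for $i=1,\dots,n-1$, a map $\Phi=(\phi_1,\dots,\phi_{n-1}):N_n\to\rn^{n-1}$ into the standard flat target is a harmonic morphism precisely when each $\phi_i$ is harmonic and the gradients of the components are mutually orthogonal and of a common length, that is, when $\tau(\phi_i)=0$ and $\kappa(x_{i,i+1},x_{k,k+1})=\lambda^2\delta_{ik}$ for some function $\lambda$.

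Harmonicity is immediate: Lemma \ref{lemm:nilpotent0} already records $\tau(x_{ij})=0$ for every strictly upper-triangular coordinate, so in particular $\tau(x_{i,i+1})=0$ for each $i$.

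The heart of the argument is the evaluation of $\kappa(x_{i,i+1},x_{k,k+1})$ by specialising the formula of Lemma \ref{lemm:nilpotent0} to $j=i+1$ and $l=k+1$. The Kronecker factor becomes $\delta_{i+1,k+1}=\delta_{ik}$, so the quantity vanishes whenever $i\ne k$; this is the horizontal orthogonality of distinct components. When $i=k$ the summation range $\max\{i,k\}\le r<l$ shrinks to the single value $r=i$, leaving the sum equal to $x_{ii}^2$. The one structural input from $N_n$ now enters: the diagonal entries of a unipotent matrix are identically $1$, so $x_{ii}\equiv 1$ and the sum equals $1$. Hence $\kappa(x_{i,i+1},x_{k,k+1})=\delta_{ik}$.

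This single identity settles both conditions simultaneously. The off-diagonal zeros give horizontal orthogonality, while the diagonal value shows the dilation is the constant $\lambda^2=1$; in particular the component gradients are everywhere orthonormal, so $\Phi$ is a submersion without critical points and is even horizontally homothetic. Being a horizontally weakly conformal harmonic map, $\Phi$ is therefore a harmonic morphism by the theorem of Fuglede and Ishihara. I anticipate no real obstacle here: all the analytic effort has been absorbed into Lemma \ref{lemm:nilpotent0}, and the only thing to notice is that the collapse of the summation combined with unipotency forces a constant dilation.
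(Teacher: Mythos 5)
Your argument is correct and is essentially the paper's own proof: both rest entirely on Lemma \ref{lemm:nilpotent0}, specialising it to the superdiagonal coordinates to obtain $\tau(\phi_k)=0$ and $\kappa(\phi_k,\phi_l)=\delta_{kl}$, and then invoking the Fuglede--Ishihara characterisation. The only difference is that you spell out the collapse of the sum to $x_{ii}^2=1$ via unipotency, which the paper leaves implicit in the phrase ``direct consequence.''
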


\begin{proof}  As a direct consequence of Lemma \ref{lemm:nilpotent0}
we see that the components $\phi_1,\dots ,\phi_{n-1}$ of the
epimorphism $\Phi:N_n\to\rn^{n-1}$ satisfy the following system of
partial differential equations
$$\tau(\phi_k)=0\ \ \text{and}\ \ \kappa(\phi_k,\phi_l)=\delta_{kl}.$$
The result is a direct consequence of these formulae.
\end{proof}

\section{The First Construction}

In this section we generalize the above construction for $N_n$ to
a large class of Lie groups. We find an algebraic condition on the
Lie algebra which ensures the existence of at least local harmonic
morphisms.

\begin{proposition}\label{prop:general}
Let $G$ be a connected, simply connected Lie group with Lie
algebra $\g$ and non-trivial quotient algebra $\a=\g/[\g,\g]$ of
dimension $n$. Then there exists a natural group epimorphism
$\Phi:G\to\rn^n$ and left-invariant Riemannian metrics on $G$
turning $\Phi$ into a Riemannian submersion.
\end{proposition}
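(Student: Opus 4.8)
The plan is to build $\Phi$ by integrating the abelianization map on the level of Lie algebras, and then to choose the metric so that the Riemannian submersion condition holds at the identity and propagates to all of $G$ by left-invariance. First I would observe that the quotient projection $\pi:\g\to\a=\g/[\g,\g]$ is a homomorphism of Lie algebras onto the abelian algebra $\a$, whose underlying vector space has dimension $n$. Regarding $\a$ as the Lie algebra of the additive group $\rn^n$ and using that $G$ is simply connected, $\pi$ integrates to a unique Lie group homomorphism $\Phi:G\to\rn^n$ with $d\Phi_e=\pi$. Since $\pi$ is surjective, $d\Phi_e$ is surjective, so $\Phi$ is a submersion at $e$; differentiating the homomorphism relation shows $\Phi$ is a submersion everywhere and hence has open image. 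As the image is also a subgroup of the connected group $\rn^n$, it must be all of $\rn^n$, so $\Phi$ is the desired epimorphism.

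Next I would construct the metric purely algebraically. The kernel of $d\Phi_e$ is precisely $[\g,\g]$, so I fix a vector-space complement $\m$ with $\g=[\g,\g]\oplus\m$; the restriction $\pi|_\m:\m\to\a$ is then a linear isomorphism. Pulling back the standard inner product on $\rn^n\cong\a$ along $\pi|_\m$ gives an inner product on $\m$; choosing any inner product on $[\g,\g]$ and declaring the two summands orthogonal produces an inner product on $\g$, which I then extend to a left-invariant Riemannian metric on $G$. By construction the horizontal space at the identity is $\m$, and $d\Phi_e|_\m=\pi|_\m$ is a linear isometry onto $T_0\rn^n$.

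Finally I would verify the submersion property globally by equivariance. Since $\Phi$ is a homomorphism into the additive group, $\Phi\circ L_x=T_{\Phi(x)}\circ\Phi$ for every $x\in G$, where $L_x$ is an isometry of the left-invariant metric and the translation $T_{\Phi(x)}$ is an isometry of the standard flat metric. Differentiating this relation at the identity carries the isometry of horizontal spaces from $e$ to every point $x$, so $\Phi$ is a Riemannian submersion. The construction is otherwise routine Lie theory; the only points requiring care are that surjectivity comes from the open-subgroup argument rather than from simple connectivity by itself, and that the global submersion condition is legitimately reduced to the single computation at the identity via the equivariance of $\Phi$ under the isometric actions of $L_x$ and $T_{\Phi(x)}$.
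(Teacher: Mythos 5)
Your proposal is correct and follows essentially the same route as the paper: both integrate the abelianization $\pi:\g\to\g/[\g,\g]\cong\rn^n$ to a group epimorphism using simple connectivity, choose an inner product on $\g$ making $\pi$ a linear Riemannian submersion (orthogonal complement of $[\g,\g]$ mapped isometrically), and propagate by left-invariance; the paper merely factors through the abstract simply connected abelian group $A$ with Lie algebra $\a$ before identifying it with $\rn^n$. Your extra care about surjectivity via the open-subgroup argument and the explicit equivariance $\Phi\circ L_x=T_{\Phi(x)}\circ\Phi$ fills in details the paper leaves implicit.
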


\begin{proof}
The two Lie algebras $\a$ and $\rn^n$ are Abelian so there exists
an isomorphism $\psi:\a\to\rn^n$ which lifts to a Lie group
isomorphism $\Psi:A\to\rn^n$ from the connected and simply
connected Lie group $A$ with Lie algebra $\a$.

On $\rn^n$ we have the standard Euclidean scalar product. Equip
the Lie algebra $\a$ with the unique scalar product turning $\psi$
into an isometry. This induces a left-invariant metric on $A$ and
the isomorphism $\Psi:A\to\rn^n$ is clearly an isometry.

Then equip the Lie algebra $\g$ of $G$ with {\it any} Euclidean
scalar product such that the projection map $\pi:\g\to\a$ is a
Riemannian submersion. This gives a Riemannian metric on $G$ and
the induced group epimorphism $\Pi:G\to A$ is a Riemannian
submersion. It follows from the above construction that the
composition $\Phi=\Psi\circ\Pi:G\to\rn^n$ is a group epimorphism
and its differential $d\Phi_e$ at $e$ is a Lie algebra
homomorphism. Furthermore $\Phi$ is a Riemannian submersion.
\end{proof}

\begin{theorem}\label{theo:general}
Let $G$ be a connected, simply connected Lie group with Lie
algebra $\g$ and non-trivial quotient algebra $\g/[\g,\g]$ of
dimension $n$. Let $g$ be a Riemannian metric on $G$ such that the
natural group epimorphism $\Phi:(G,g)\to\rn^n$ is a Riemannian
submersion. For the canonical basis $\{e_1,\dots,e_n\}$ of $\rn^n$
let $\{X_1,\dots,X_n\}$ be the orthonormal basis of the horizontal
space $\H_e$ with $d\Phi_e(X_i)=e_i$ and define the vector
$\xi\in\cn^n$ by $$\xi=(\trace\ad_{X_1},\dots,\trace\ad_{X_n}).$$
For a maximal isotropic subspace $W$ of $\cn^n$ put
$$V=\{w\in W\ |\ (w,\xi)=0\}.$$ If the dimension of the isotropic
subspace $V$ is at least $2$ then
$$\Omega_V=\{\phi_v(x)=(\Phi(x),v)\ |\ v\in V\}$$ is an orthogonal
family of globally defined harmonic morphisms on $(G,g)$.
\end{theorem}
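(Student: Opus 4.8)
The plan is to verify directly the two defining equations of a complex-valued harmonic morphism for each $\phi_v$, namely $\tau(\phi_v)=0$ and $\kappa(\phi_v,\phi_v)=0$, and more generally $\kappa(\phi_v,\phi_w)=0$ for all $v,w\in V$; by the characterisation of Fuglede and Ishihara this makes $\Omega_V$ an orthogonal family of harmonic morphisms. The starting observation is that, because $\Phi$ is a group epimorphism onto the \emph{abelian} group $\rn^n$, one has $\Phi(p\cdot\exp(sX))=\Phi(p)+s\,d\Phi_e(X)$ exactly for every $X\in\g$, so that $\phi_v(p\cdot\exp(sX))=\phi_v(p)+s\,(d\Phi_e(X),v)$. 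Differentiating at $s=0$ yields $X(\phi_v)=(d\Phi_e(X),v)$, a \emph{constant} function on $G$, and hence $X^2(\phi_v)=0$ for every left-invariant $X$.

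For the Laplacian I would use the general formula $\tau(\phi)=\sum_{X\in\B}\bigl(X^2(\phi)-(\nabla_XX)(\phi)\bigr)$ valid for any left-invariant orthonormal basis $\B$ of $\g$. The first sum vanishes by the previous step, so $\tau(\phi_v)=-(d\Phi_e(H),v)$ with $H=\sum_{X\in\B}\nabla_XX$. From the Koszul formula for a left-invariant metric one obtains $g(\nabla_XX,Z)=g([Z,X],X)$ for left-invariant $X,Z$, whence $g(H,Z)=\sum_{X\in\B}g(\ad_Z X,X)=\trace\ad_Z$. Since $Z\mapsto\trace\ad_Z$ vanishes on $[\g,\g]=\V_e$, the vector $H$ is horizontal; expanding it in the orthonormal basis $\{X_1,\dots,X_n\}$ of $\H_e$ gives $H=\sum_i(\trace\ad_{X_i})X_i$ and therefore $d\Phi_e(H)=(\trace\ad_{X_1},\dots,\trace\ad_{X_n})=\xi$. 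Thus $\tau(\phi_v)=-(\xi,v)$, which vanishes precisely because $v\in V$ satisfies $(v,\xi)=0$.

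For orthogonality I would compute $\kappa$ in the \emph{adapted} orthonormal basis $\B=\{X_1,\dots,X_n\}\cup\{Y_1,\dots,Y_{m-n}\}$ (with $m=\dim G$), where $\{Y_j\}$ is an orthonormal basis of the vertical space $\V_e=[\g,\g]$. Then $Y_j(\phi_v)=(d\Phi_e(Y_j),v)=0$ and $X_i(\phi_v)=(e_i,v)=v_i$, so
$$\kappa(\phi_v,\phi_w)=\sum_{X\in\B}X(\phi_v)\,X(\phi_w)=\sum_{i=1}^n v_iw_i=(v,w).$$
As $v,w\in V\subseteq W$ and $W$ is isotropic, $(v,w)=0$; in particular $\kappa(\phi_v,\phi_v)=0$. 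Each $\phi_v$ is therefore harmonic and horizontally conformal, i.e.\ a globally defined harmonic morphism, and distinct members of $\Omega_V$ are mutually orthogonal. The hypothesis $\dim V\ge 2$ ensures that $V$ contains nonzero (hence submersive) vectors and that $\Omega_V$ is a genuinely multi-parameter family, suitable for feeding into Theorem \ref{theo:local-sol}.

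I expect the single content-bearing step to be the identification $d\Phi_e(H)=\xi$: recognising that the trace $H=\sum_{X}\nabla_XX$ of the Levi-Civita connection is metrically dual to the linear form $Z\mapsto\trace\ad_Z$, that this form annihilates $[\g,\g]$ so that $H$ is horizontal, and that its image under $d\Phi_e$ can then be read off directly from the chosen basis $\{X_i\}$. Everything else reduces to the two short calculations of $X(\phi_v)$ and of $\sum_X\nabla_XX$ via Koszul.
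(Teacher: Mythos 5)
Your proof is correct and follows essentially the same route as the paper: the key observation that $\Phi(p\cdot\exp(tX))=\Phi(p)+t\,d\Phi_e(X)$ makes each $X(\phi_v)$ constant, and the Koszul computation identifies $d\Phi_e\bigl(\sum_X\nabla_XX\bigr)$ with $\xi$, so that $\tau(\phi_v)=\pm(\xi,v)=0$ for $v\in V$. In fact you are slightly more complete than the published proof, which stops after computing $\tau(\Phi)$ and leaves implicit both the orthogonality step $\kappa(\phi_v,\phi_w)=(v,w)=0$ (immediate from isotropy of $W$) and the observation that the vertical components of $\sum_X\nabla_XX$ are killed by $d\Phi_e$ --- which the paper gets by splitting the basis as $\B_1\cup\B_2$, whereas you get it from the (equally valid) remark that $Z\mapsto\trace\ad_Z$ vanishes on $[\g,\g]$.
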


\begin{proof}  First of all we note that for $X\in\g$ we have
\begin{eqnarray*}
X^k(\Phi)(p)&=&\frac {d^k}{dt^k}(\Phi(L_p(\exp(tX)))|_{t=0}\\
&=&\frac {d^k}{dt^k}(\Phi(p)+\Phi(\exp(tX)))|_{t=0}\\
&=&\frac {d^k}{dt^k}(t\Phi(X))|_{t=0}. \end{eqnarray*}  Then fix
an orthonormal basis $\B=\B_1\cup\B_2$ where $\B_1$ is an
orthonormal basis for $[\g,\g]$ and $\B_2$ for the orthogonal
complement $[\g,\g]^\perp$.  The tension field of $\Phi$ can now
be calculated as follows:
\begin{eqnarray*}
\tau(\Phi)(p)&=&\sum_{X\in\B}X^2(\Phi)(p)-d\Phi_p(\nab XX)\\
&=& -d\Phi_e(\sum_{X\in\B}\nab{X}{X})\\
&=&-d\Phi_e(\sum_{X,Z\in\B}\ip{\nab{X}{X}}{Z}Z)\\
&=&\sum_{Z\in\B_2}\sum_{X\in\B}\ip{[Z,X]}{X}d\Phi_e(Z)\\
&=&\sum_{Z\in\B_2}(\trace\ad_Z)d\Phi_e(Z).
\end{eqnarray*}

\end{proof}

\section{Nilpotent Lie Groups}

In this section we show that every connected, simply connected
nilpotent Lie group $G$ with Lie algebra $\g$ can be equipped with
natural Riemannian metrics admitting complex valued harmonic
morphisms on $G$. When the algebra is Abelian the problem is
completely trivial, so we shall assume that $\g$ is not Abelian.
In that case we have the following well-known fact.

\begin{lemma}\label{lemm:nilpotent}
Let $\g$ be a non-Abelian nilpotent Lie algebra.  Then the
dimension of the quotient algebra $\g/[\g,\g]$ is at least $2$.
\end{lemma}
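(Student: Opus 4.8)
The plan is to argue by contradiction: assuming $\dim\g/[\g,\g]\le 1$, I would show that $\g$ must be Abelian, contradicting the hypothesis that $\g$ is non-Abelian. The principal tool is the lower central series, defined by $\g^1=\g$ and $\g^{k+1}=[\g,\g^k]$, together with the defining feature of nilpotency that this series reaches $0$ after finitely many steps, say $\g^N=0$.

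First I would dispose of the degenerate case $\dim\g/[\g,\g]=0$. Here $\g=[\g,\g]=\g^2$, and feeding this into the recursion gives $\g=\g^2=\g^3=\cdots=\g^N=0$; thus $\g$ is (trivially) Abelian. The substantive case is $\dim\g/[\g,\g]=1$. I would fix $X\in\g$ whose class spans the one-dimensional quotient, so that $\g=\rn X\oplus[\g,\g]$ as vector spaces. Expanding an arbitrary bracket with respect to this decomposition and using $[X,X]=0$ yields $[\g,\g]=[X,[\g,\g]]+[[\g,\g],[\g,\g]]$. The key observation is that both summands lie in $\g^3$: indeed $[X,[\g,\g]]\subseteq[\g,\g^2]=\g^3$, and likewise $[[\g,\g],[\g,\g]]\subseteq[\g,\g^2]=\g^3$ since $\g^2\subseteq\g$. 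Hence $\g^2=[\g,\g]\subseteq\g^3\subseteq\g^2$, forcing $\g^2=\g^3$.

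The only delicate point is the stabilization step, namely the implication $\g^2=\g^3\Rightarrow\g^2=0$. This is immediate from nilpotency: applying $[\g,-]$ to the equality $\g^2=\g^3$ repeatedly gives $\g^k=\g^2$ for every $k\ge 2$, and since $\g^N=0$ we conclude $\g^2=0$. Once more $\g$ is Abelian, completing the contradiction. I expect the main (and rather modest) obstacle to be the clean bookkeeping of the bracket expansion and the verification of the two inclusions into $\g^3$; everything else is standard nilpotent Lie algebra theory.
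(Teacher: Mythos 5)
Your proposal is correct and follows essentially the same route as the paper: write $\g=\rn X\oplus[\g,\g]$ in the one-dimensional case, observe that expanding brackets gives $[\g,\g]\subseteq[\g,[\g,\g]]$, and conclude from nilpotency that $[\g,\g]=0$. You merely make explicit, via the lower central series, the stabilization step ($\g^2=\g^3\Rightarrow\g^2=0$) and the degenerate case $[\g,\g]=\g$, both of which the paper disposes of in a single phrase.
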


\begin{proof}
Since the algebra $\g$ is nilpotent we have $[\g,\g]\neq\g$.
Assume that the quotient algebra $\g/[\g,\g]$ is of dimension $1$
i.e.
$$\g=\rn X\oplus[\g,\g]$$ for some $X\in\g$. Then
$[\g,\g]\subseteq [\g,[\g,\g]]$ and since of course
$[\g,[\g,\g]]\subseteq[\g,\g]$ we must have
$[\g,\g]=[\g,[\g,\g]]$. But as $\g$ is nilpotent, this is only
possibible if $[\g,\g]=0$ i.e. if $\g$ is Abelian.
\end{proof}

For non-Abelian nilpotent Lie groups the existence result of
Theorem \ref{theo:general} simplifies to the following.

\begin{theorem}\label{theo:nilpotent}
Let $G$ be a connected, simply connected, non-Abelian and
nilpotent Lie group with Lie algebra $\g$. Then there exist
Riemannian metrics $g$ on G such that the natural group
epimorphism $\Phi:(G,g)\to\rn^n$ is a Riemannian submersion. If
$V$ is a maximal isotropic subspace of $\cn^n$ then
$$\Omega_V=\{\phi_v(x)=(\Phi(x),v)\ |\ v\in V\}$$ is an orthogonal
family of globally defined harmonic morphisms on $(G,g)$.
\end{theorem}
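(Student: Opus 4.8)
The plan is to derive the theorem as the specialization of Theorem~\ref{theo:general} to the nilpotent setting, where the obstruction vector $\xi$ vanishes identically. First I would settle the existence assertion: since $\g$ is non-Abelian and nilpotent, Lemma~\ref{lemm:nilpotent} guarantees that $n=\dim(\g/[\g,\g])\ge 2$, so the quotient algebra is non-trivial and Proposition~\ref{prop:general} applies verbatim, producing left-invariant metrics $g$ for which the natural epimorphism $\Phi:(G,g)\to\rn^n$ is a Riemannian submersion. This establishes the first sentence of the theorem and provides exactly the hypotheses needed to invoke Theorem~\ref{theo:general}.

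The heart of the argument is the evaluation of $\xi=(\trace\ad_{X_1},\dots,\trace\ad_{X_n})$. Here I would use the structural fact that in a nilpotent Lie algebra every adjoint endomorphism $\ad_X$ is nilpotent: writing the lower central series $\g=\g^{(1)}\supseteq\g^{(2)}\supseteq\cdots$ with $\g^{(i+1)}=[\g,\g^{(i)}]$, an easy induction gives $(\ad_X)^m(Y)\in\g^{(m+1)}$, so nilpotency of $\g$ forces a power of $\ad_X$ to vanish. A nilpotent endomorphism has vanishing trace, whence $\trace\ad_X=0$ for every $X\in\g$, and in particular $\xi=0$.

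With $\xi=0$ the linear condition $(w,\xi)=0$ appearing in Theorem~\ref{theo:general} is satisfied by every $w$, so the admissible subspace $V=\{w\in W\mid(w,\xi)=0\}$ coincides with the entire maximal isotropic subspace $W$. Recalling the computation in the proof of Theorem~\ref{theo:general}, together with $X^2(\Phi)=0$ and $X(\Phi)=d\Phi_e(X)$, one finds $\tau(\phi_v)=(\xi,v)$ and $\kappa(\phi_v,\phi_w)=(v,w)$ for $v,w\in V$; the former vanishes because $\xi=0$ and the latter because $V$ is isotropic. Hence each $\phi_v$ is harmonic and horizontally conformal, any two are orthogonal, and $\Omega_V$ is an orthogonal family of globally defined harmonic morphisms.

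The single point requiring care is the dimensional hypothesis $\dim V\ge 2$ built into Theorem~\ref{theo:general}, which fails when $n=2$ or $n=3$ since then $\dim W=\lfloor n/2\rfloor=1$. I expect this to be the main, if minor, obstacle. I would dispose of it by noting that once $\xi=0$ the verification of $\tau(\phi_v)=0$ and $\kappa(\phi_v,\phi_v)=0$ is immediate and uses no lower bound on $\dim V$; the hypothesis $\dim V\ge 2$ in Theorem~\ref{theo:general} is needed only to guarantee a non-trivial family when $\xi$ may be non-zero, and is superfluous here. Thus the conclusion persists for every $\dim V\ge 1$, covering all non-Abelian nilpotent $G$.
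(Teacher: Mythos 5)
Your proof is correct and follows essentially the same route as the paper: specialize Theorem~\ref{theo:general}, observe that $\ad_X$ is nilpotent hence traceless so $\xi=0$, and invoke Lemma~\ref{lemm:nilpotent} to get $n\ge 2$. Your extra care about the hypothesis $\dim V\ge 2$ is well placed --- for $n=2,3$ a maximal isotropic subspace of $\cn^n$ is only one-dimensional, the paper's one-line proof silently glosses over this, and your observation that the tension-field and conformality computations ($\tau(\phi_v)=(\xi,v)$ and $\kappa(\phi_v,\phi_w)=(v,w)$) never use the bound $\dim V\ge 2$ is exactly the right way to close that gap.
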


\begin{proof}
The Lie algebra $\g$ is nilpotent, so if $Z\in\g$ then
$\trace\ad_Z=0$.  This means that the vector $\xi$ defined in
Theorem \ref{theo:general} vanishes.  The result is then a direct
consequence of Lemma \ref{lemm:nilpotent}.
\end{proof}

\section{The Nilpotent Heisenberg Group $H_n$}

We shall now apply Theorem \ref{theo:nilpotent} to yield an
orthogonal family of complex valued harmonic morphisms from the
well-known $(2n+1)$-dimensional nilpotent Heisenberg group
$$H_n=\Bigg\{\begin{pmatrix}
1 &   x & z\\
0 & I_n & y\\
0 &   0 & 1
\end{pmatrix}\in N_{n+2}|\ x,y^t\in\rn^n,\ z\in\rn\Bigg\},$$ where $I_n$
is the $n\times n$ identity matrix. A canonical orthonormal basis
$\B$ for the nilpotent Lie algebra $\h_n$ of $H_n$ consists of the
following matrices
$$X_k=\begin{pmatrix}
0 & e_k & 0\\
0 & 0_n & 0\\
0 &   0 & 0
\end{pmatrix},\
Y_k=\begin{pmatrix}
0 &   0 & 0\\
0 & 0_n & e_k^t\\
0 &   0 & 0
\end{pmatrix},\
Z=\begin{pmatrix}
0 &   0 & 1\\
0 & 0_n & 0\\
0 &   0 & 0
\end{pmatrix},$$
where $\{e_1,\dots ,e_n\}$ is the canonical basis for $\rn^n$. The
derived algebra $[\h_n,\h_n]$ is generated by $Z$ and the quotient
$\h_n/[\h_n,\h_n]$ can be identified with the $2n$-dimensional
subspace generated by $X_1,\dots,X_n,Y_1,\dots,Y_n$. The natural
group epimorphism $\Phi:H_n\to\rn^{2n}$ is given by
$$\Phi:\begin{pmatrix}
1 &   x & z\\
0 & I_n & y\\
0 &   0 & 1
\end{pmatrix}\to(x_1,\dots,x_n,y_1,\dots,y_n).$$

\begin{theorem}\label{theo:real-Heisenberg}
Let $H_n$ be the $(2n+1)$-dimensional Heisenberg group and
$\Phi:H_n\to\rn^{2n}$ be the natural group epimorphism. If $V$ is
a maximal isotropic subspace of $\cn^{2n}$ then
$$\Omega_V=\{\phi_v:x\mapsto (\Phi(x),v)\ |\ v\in V\}$$is an
orthogonal family of globally defined harmonic morphisms on $H_n$.
\end{theorem}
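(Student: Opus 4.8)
The plan is to apply Theorem~\ref{theo:nilpotent} directly to the Heisenberg group $H_n$, so the proof amounts to verifying that the hypotheses of that theorem are met and identifying the objects it produces with the ones in the present statement. First I would observe that $H_n$ is connected, simply connected, nilpotent, and non-Abelian (the bracket $[X_k,Y_k]=Z$ is nonzero, so the algebra is not Abelian), which are exactly the standing assumptions of Theorem~\ref{theo:nilpotent}. The setup in this section has already computed that $[\h_n,\h_n]$ is spanned by $Z$ and that the quotient $\h_n/[\h_n,\h_n]$ is the $2n$-dimensional space spanned by $X_1,\dots,X_n,Y_1,\dots,Y_n$, so the integer $n$ of Theorem~\ref{theo:nilpotent} equals $2n$ here, and the natural epimorphism $\Phi:H_n\to\rn^{2n}$ displayed above is precisely the map furnished by Proposition~\ref{prop:general}.

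With the identification in place, the conclusion is immediate. Theorem~\ref{theo:nilpotent} guarantees the existence of left-invariant metrics turning $\Phi$ into a Riemannian submersion; the standard metric inherited from $\GLR{n+2}$, for which $\B$ is orthonormal, is one such metric, since the chosen basis respects the orthogonal splitting $\h_n=[\h_n,\h_n]\oplus[\h_n,\h_n]^\perp$. For this metric the horizontal space $\H_e$ is spanned by the $X_k,Y_k$ and maps isometrically onto $\rn^{2n}$ under $d\Phi_e$. Then for any maximal isotropic subspace $V$ of $\cn^{2n}$, Theorem~\ref{theo:nilpotent} asserts that $\Omega_V=\{\phi_v:x\mapsto(\Phi(x),v)\mid v\in V\}$ is an orthogonal family of globally defined harmonic morphisms, which is exactly the claim.

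I do not expect any genuine obstacle here: the statement is essentially a worked specialisation of the general nilpotent result, and the only thing to check is the bookkeeping that the abstract quotient dimension of Theorem~\ref{theo:nilpotent} matches the explicit $2n$ computed for $\h_n$, together with the remark that the trace vector $\xi$ vanishes automatically (as it does for every nilpotent algebra). The mildly delicate point, if any, is simply to note that the standard metric is admissible, i.e.\ that with respect to it the orthogonal complement of $[\h_n,\h_n]$ is indeed the horizontal distribution carrying the coordinate directions $x_k,y_k$; this is clear from the canonical orthonormal basis $\B$ exhibited above.
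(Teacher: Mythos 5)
Your proposal is correct and follows essentially the same route as the paper: the section preceding the theorem is precisely this specialisation of Theorem~\ref{theo:nilpotent}, identifying $[\h_n,\h_n]=\rn Z$, the $2n$-dimensional quotient spanned by the $X_k,Y_k$, and the standard metric as an admissible one. Your added bookkeeping (non-Abelianness, vanishing of $\xi$, orthogonality of the splitting for the trace metric) is exactly what the paper leaves implicit.
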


In \cite{Bai-Woo-1} Baird and Wood study the existence of harmonic
morphisms from the $3$-dimensional Heisenberg group $Nil=H_1$ to
surfaces. They show that up to conformal transfomations on the
codomain the only solutions defined locally on $H_1$ are the
restrictions to the natural group epimorphism $\phi:H_1\to\cn$
with
$$\phi:\begin{pmatrix}
        1 & x & z
\\      0 & 1 & y
\\      0 & 0 & 1
\end{pmatrix}\mapsto x+iy.$$

\section{The Nilpotent Lie Group $K_n$}

In this section we employ Theorem \ref{theo:nilpotent} to
construct a harmonic morphism on the $(n+1)$-dimensional nilpotent
subgroup $K_n$ of $\GLR {n+1}$ given by
$$K_n=\Bigg\{\begin{pmatrix}
1 &      x & p_2(x) & p_3(x) & \cdots & p_{n-1}(x) &     y_1\\
0 &      1 &      x & p_2(x) & \cdots & p_{n-2}(x) & y_{2}\\
  & \ddots & \ddots & \ddots & \ddots &     \vdots &  \vdots\\
  &        &      0 &      1 &      x &     p_2(x) &     y_{n-2}\\
  &        &        &      0 &      1 &          x &     y_{n-1}\\
  &        &        &        &      0 &          1 &     y_n\\
  &        &        &        &        &          0 &       1\\
\end{pmatrix}\ |\
x,y_{k}\in\rn\Bigg\},$$ where the polynomials $p_2,\dots,p_{n-1}$
are given by $p_k(x)=x^k/k!$ For the Lie algebra
$$\k_n=\Bigg\{\begin{pmatrix}
0 & \alpha &      0 &        &        &      0 &     \beta_1\\
  &      0 & \alpha &      0 &        &      0 & \beta_{2}\\
  & \ddots & \ddots & \ddots & \ddots & \vdots &      \vdots\\
  &        &        & 0      & \alpha &      0 &     \beta_{n-2}\\
  &        &        &        &    0   & \alpha &     \beta_{n-1}\\
  &        &        &        &        &      0 &     \beta_{n}\\
  &        &        &        &        &        &       0\\
\end{pmatrix} \in\glr {n+1} \ |\
\alpha,\beta_{k}\in\rn\Bigg\}$$ of $K_n$ we have the orthonormal
basis $\B$ consisting of the matrices
$$Y_1=E_{1,n+1},\dots,Y_n=E_{n,n+1}\ \ \text{and}\ \  X=\frac
1{\sqrt{n-1}}(E_{12}+\cdots + E_{n-1,n}).$$ The derived algebra
$[\k_n,\k_n]$ is generated by the vectors $Y_1,\dots,Y_{n-1}$ and
the quotient algebra $\k_n/[\k_n,\k_n]$ can be identified with the
$2$-dimensional subspace of $\k_n$ generated by $X$ and $Y_{n}$.
The natural group epimorphism $\Phi:K_n\to\cn$ given by
$$\Phi:p\mapsto (x\sqrt{n-1}+iy_n)$$ is a globally defined
harmonic morphism on $K_n$.

\section{Compact Nilmanifolds}

A {\it nilmanifold} is a homogeneous space of a nilpotent Lie group. As
is well known, any compact  nilmanifold is of the form $G/\Gamma$, where
$G$ is a nilpotent Lie group and $\Gamma$ a {\it uniform} subgroup of $G$,
i.e., a co-compact discrete subgroup. We have proved the global existence of
harmonic morphisms on any nilpotent Lie group with a left-invariant metric.
We include here a section where we prove the existence of a globally defined
harmonic morphism on any compact nilmanifold $G/\Gamma$, for which $G$ has
rational structure constants.

To begin with, assume that $(M,g)$ is a compact Riemannian manifold and
$\omega_1,\dots,\omega_k$ be a basis for the linear space of
harmonic 1-forms on $M$; thus $k$ is the first Betti number of $M$.
Define the lattice $\Lambda$ in $\rn^k$ as the integer span of the vectors
$$(\int_\gamma\omega_1,\dots,\int_\gamma\omega_k)\ \ \text{with}\ \ \gamma
\in H_1(M,\zn).$$ After fixing a point $p\in M$ we can define the
correponding  {\it Albanese map}
$$\pi_p:M\to\rn^k/\Lambda,\ \pi_p(q)=(\int_p^q\omega_1,\dots,
\int_p^q\omega_k)\ \text{mod}\ \Lambda.$$ Note that $\pi_p$ is a
harmonic map with respect to the flat metric on the torus
$\rn^k/\Lambda$.

To continue our construction, we need the following simple lemma,
where $\L$ denotes Lie derivative.
\begin{lemma}\label{lemm::invariant} Assume that $\omega$ is a
harmonic $p$-form on a compact Riemannian manifold $(M,g)$, and $X$ a
Killing vector field. Then $$\L_X\omega=0.$$
\end{lemma}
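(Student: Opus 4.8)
The plan is to exhibit $\L_X\omega$ as a form that is simultaneously \emph{harmonic} and \emph{exact}, and then to invoke Hodge theory on the compact manifold $M$ to conclude that any such form must vanish. Throughout I use that, on a compact Riemannian manifold, a $p$-form is harmonic (i.e.\ annihilated by the Hodge Laplacian $\Delta=dd^*+d^*d$) if and only if it is both closed and coclosed, $d\omega=0$ and $d^*\omega=0$.

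First I would show that $\L_X\omega$ is again harmonic. Since $X$ is a Killing field and $M$ is compact, the flow $\{\phi_t\}$ of $X$ is complete and consists of isometries. Isometries commute with the Hodge star operator and with the exterior derivative $d$, hence with the codifferential $d^*$, and therefore with the Hodge Laplacian $\Delta$. Differentiating the identity $\Delta(\phi_t^*\omega)=\phi_t^*(\Delta\omega)$ at $t=0$ then gives $\Delta(\L_X\omega)=\L_X(\Delta\omega)=0$, so $\L_X\omega$ is harmonic.

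Next I would use Cartan's formula $\L_X=d\,\iota_X+\iota_X\,d$, where $\iota_X$ denotes interior multiplication by $X$. Because $\omega$ is harmonic it is in particular closed, so $\iota_X\,d\omega=0$ and hence $\L_X\omega=d(\iota_X\omega)$ is exact. Combining this with the previous step, set $\eta:=\L_X\omega$ and $\alpha:=\iota_X\omega$, so that $\eta$ is harmonic, hence coclosed ($d^*\eta=0$), and $\eta=d\alpha$. Using compactness of $M$ and integration by parts for the $L^2$ inner product $\langle\cdot,\cdot\rangle$ on forms,
$$\langle\eta,\eta\rangle=\langle d\alpha,\eta\rangle=\langle\alpha,d^*\eta\rangle=0,$$
so $\eta=\L_X\omega=0$, which is the assertion.

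The steps are all standard; the only point that deserves genuine care is the first one, namely the claim that the Killing flow commutes with $\Delta$ (equivalently, preserves the space of harmonic forms). This rests on completeness of the flow, which I would be sure to cite from compactness of $M$ before differentiating under $\phi_t^*$, and on $\Delta$ being constructed canonically from the metric alone. I note that one could instead argue directly via uniqueness of harmonic representatives: $\phi_t^*\omega$ is harmonic and cohomologous to $\omega$ (the flow is homotopic to the identity), so $\phi_t^*\omega=\omega$ for all $t$, and differentiating at $t=0$ again yields $\L_X\omega=0$.
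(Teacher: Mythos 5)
Your proof is correct and follows essentially the same route as the paper: the flow of the Killing field acts by isometries so $\L_X\omega$ is harmonic, Cartan's formula together with $d\omega=0$ shows it is exact, and a harmonic exact form on a compact manifold vanishes. You merely spell out the details (commutation with $\Delta$, the $L^2$ integration by parts) that the paper leaves implicit.
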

\begin{proof} Let $\varphi_t$ be the flow of $X$. As $\varphi_t$ is
an isometry, $\varphi_t^*\omega$ is also harmonic. Hence
$L_X\omega=d\iota_X\omega$ is a harmonic $p$-form, but as
it is also exact, it must vanish.
\end{proof}

Let $G$ be a Lie group and $\Gamma$ be a co-compact, discrete
subgroup of $G$. We equip $G$ with a left-invariant metric and
$G/\Gamma$ with the metric which makes the quotient map $G\to
G/\Gamma$ into a Riemannian submersion. Thus $G$ acts by
isometries on $G/\Gamma$. By the above lemma, any harmonic
$1$-form on $G/\Gamma$ is left-invariant by $G$, and so has
constant pointwise norm. Hence, any two harmonic $1$-forms
which are orthogonal at one point, remain orthogonal everywhere.
Thus we can choose $\omega_1,\dots,\omega_k$ in the above
construction such that the relation
$$\ip{\omega_i}{\omega_j}=\delta_{ij}$$ holds everywhere on
$G/\Gamma$. This makes the Albanese map into a Riemannian harmonic
submersion and hence a harmonic morphism.

Malcev proves in \cite{Mal} that a simply connected nilpotent Lie
group $G$ contains a uniform subgroup $\Gamma$ if and only if the
Lie algebra $\g$ of $G$ has rational structure constants with
respect to some basis.  It is known that this condition is always
satisfied if the dimension of $\g$ is less than $7$. In \cite{Nom}
Nomizu shows that the first Betti number of the quotient
$G/\Gamma$ equals the codimension of $[\g,\g]$ in $\g$, which by
Lemma \ref{lemm:nilpotent} is at least $2$ unless the algebra is
Abelian. The above arguments now deliver the following result.

\begin{theorem} Let $G$ be a non-Abelian nilpotent Lie group
for which the Lie algebra $\g$ has rational structure constants in
some basis. For any uniform subgroup $\Gamma$ and any $G$-invariant
Riemannian metric $g$ on $G/\Gamma$, there exists a harmonic morphism
from $(G/\Gamma,g)$ into a flat torus of dimension at least $2$.
\end{theorem}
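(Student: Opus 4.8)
The plan is to take the Albanese map constructed in this section as the desired harmonic morphism, and to use the $G$-invariance of $g$ to force its harmonic $1$-forms into a globally orthonormal frame. First I would note that Malcev's theorem makes the hypothesis meaningful: since $\g$ has rational structure constants, uniform subgroups $\Gamma$ exist, and for any such $\Gamma$ the quotient $G/\Gamma$ is a compact nilmanifold on which $G$ acts transitively by isometries, the isometry property coming precisely from the $G$-invariance of $g$.

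Next I would fix the dimension of the target. By Nomizu's theorem the first Betti number $k$ of $G/\Gamma$ equals the codimension of $[\g,\g]$ in $\g$, that is $k=\dim(\g/[\g,\g])$; since $\g$ is non-Abelian and nilpotent, Lemma \ref{lemm:nilpotent} gives $k\ge 2$. This $k$ will be the dimension of the flat torus $\rn^k/\Lambda$, already at least $2$ as required.

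The key step is the choice of harmonic $1$-forms. Because $G$ acts by isometries, the infinitesimal generators of its action are Killing vector fields on $G/\Gamma$, so Lemma \ref{lemm::invariant} shows that each harmonic $1$-form $\omega$ satisfies $\L_X\omega=0$ and is therefore invariant under $G$. Since the action is transitive, the pointwise inner products $\ip{\omega_i}{\omega_j}$ of any two harmonic $1$-forms are constant on $G/\Gamma$, so I may select a basis $\omega_1,\dots,\omega_k$ with $\ip{\omega_i}{\omega_j}=\delta_{ij}$ holding everywhere. Passing to the lattice $\Lambda$ of periods and the Albanese map $\pi_p:G/\Gamma\to\rn^k/\Lambda$, the metric duals $\omega_i^\sharp$ are then everywhere orthonormal and satisfy $d\pi_p(\omega_i^\sharp)=e_i$; hence $\pi_p$ has full rank $k$ at every point, and its differential restricts to an isometry on the orthogonal complement of its kernel, so $\pi_p$ is a Riemannian submersion.

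Finally, since $\pi_p$ is already a harmonic map for the flat torus metric, the combination of harmonicity with horizontal conformality — indeed with horizontal isometry — yields by the Fuglede--Ishihara characterization that $\pi_p$ is a harmonic morphism onto a flat torus of dimension $k\ge 2$. The step that requires the most care, and the one on which everything turns, is the globally orthonormal choice of the $\omega_i$: if the harmonic forms were allowed to vary, $\pi_p$ might degenerate and fail to be a submersion somewhere, but the $G$-invariance supplied by Lemma \ref{lemm::invariant} makes the inner products constant and removes this danger entirely.
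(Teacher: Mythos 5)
Your proposal is correct and follows essentially the same route as the paper: Malcev and Nomizu give the setup and the bound $k\ge 2$, Lemma \ref{lemm::invariant} forces the harmonic $1$-forms to be $G$-invariant so that a globally orthonormal basis can be chosen, and the Albanese map then becomes a harmonic Riemannian submersion, hence a harmonic morphism. No gaps.
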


To show that the lift of the Albanese map to a map between the
universal coverings is essentially the same as we constructed in
the previous section, we show the following result.

\begin{proposition}
Assume that $G$ is a connected and simply connected Lie group, and
$\Gamma$ a uniform subgroup of $G$, and equip $G$ and $G/\Gamma$
with $G$-invariant metrics for which the covering map $G\to
G/\Gamma$ is a local isometry. Then any harmonic map
$\phi:G\to\rn^n$ which is right-invariant under the action of
$\Gamma$ is a homomorphism followed by a translation in $\rn^n$.
\end{proposition}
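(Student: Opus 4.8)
The plan is to argue at the level of the differential $d\phi$ rather than $\phi$ itself, push it down to the compact quotient $G/\Gamma$, and there exploit the rigidity of harmonic $1$-forms recorded just above.

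First I would split $\phi=(\phi_1,\dots,\phi_n)$, so that each $\phi_i:G\to\rn$ is a harmonic function and each $d\phi_i$ is a closed, coclosed---hence harmonic---$1$-form on $G$. Right-invariance of $\phi$ under $\Gamma$ gives $\phi\circ R_\gamma=\phi$, and hence $R_\gamma^*\,d\phi_i=d\phi_i$ for every $\gamma\in\Gamma$ (more generally the same conclusion holds whenever $\phi(g\gamma)-\phi(g)$ is independent of $g$). Thus each $d\phi_i$ is invariant under the deck transformations and descends to a $1$-form $\alpha_i$ on $G/\Gamma$. Since the covering $G\to G/\Gamma$ is a local isometry, $\alpha_i$ is again closed and coclosed, i.e. a harmonic $1$-form on the \emph{compact} manifold $G/\Gamma$.

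Next I would feed $\alpha_i$ into the discussion preceding the proposition: as $G$ acts by isometries on $G/\Gamma$, Lemma \ref{lemm::invariant} forces every harmonic $1$-form on $G/\Gamma$ to be $G$-invariant. Pulling $\alpha_i$ back to $G$ therefore shows that $d\phi_i$ is a \emph{left-invariant} closed $1$-form. For such a form the structure equation $d\omega(X,Y)=-\omega([X,Y])$ on left-invariant fields, combined with closedness, yields $\omega([\g,\g])=0$; thus each $d\phi_i$ factors through the Abelianisation $\a=\g/[\g,\g]$.

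Finally, since $G$ is simply connected and $d\phi_i$ is closed, the primitive $\psi_i(g)=\int_e^g d\phi_i$ is well defined and path-independent, and left-invariance of $d\phi_i$ makes it additive, $\psi_i(gh)=\psi_i(g)+\psi_i(h)$, so $\psi_i:G\to\rn$ is a homomorphism with $d\psi_i=d\phi_i$. Connectedness of $G$ then gives $\phi_i=\psi_i+c_i$ for a constant $c_i$, and assembling the components yields $\phi=\psi+c$ with $\psi=(\psi_1,\dots,\psi_n):G\to\rn^n$ a homomorphism and $c\in\rn^n$ a fixed translation, as claimed; the factorisation through $\a$ moreover identifies $\psi$ with the epimorphism of Proposition \ref{prop:general}. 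I expect the one genuinely delicate point to be the descent step---checking that the invariance hypothesis really transfers to $d\phi$ and produces an honest harmonic $1$-form on the compact quotient, so that Lemma \ref{lemm::invariant} may be applied; once left-invariance of the $d\phi_i$ is secured, integrating on the simply connected $G$ is routine.
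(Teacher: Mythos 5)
Your argument is correct and follows essentially the same route as the paper's proof: descend $d\phi$ to a harmonic $1$-form on the compact quotient $G/\Gamma$, invoke Lemma \ref{lemm::invariant} to conclude that it is left-invariant, and then use left-invariance of $d\phi$ together with simple connectedness to obtain the additivity $\phi(gh)=\phi(g)+\phi(h)$ up to the constant $\phi(e)$. The only difference is cosmetic (componentwise integration of the closed form versus the paper's comparison of $\Phi(h)=\phi(g)+\phi(h)$ with $\Psi(h)=\phi(gh)$), and your extra observation that $d\phi$ annihilates $[\g,\g]$ merely anticipates the remark the paper makes after the proposition.
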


\begin{proof}
By translating, we may assume that $\phi(e)=0$. Consider the
$1$-form $\omega=d\phi$ on $G$. This is the lift of a harmonic
$1$-form on $G/\Gamma$, and so, by Lemma \ref{lemm::invariant}, is
left-invariant on $G$. Fix a $g\in G$ and define
$\Phi(h)=\phi(g)+\phi(h)$ and $\Psi(h)=\phi(gh)$. As $d\phi$ is
left-invariant, it follows easily that $d\Phi=d\Psi$, and since
$\Phi(e)=\Psi(e)$, we get $\Phi=\Psi$. Hence $\phi$ is a
homomorphism.
\end{proof}

Thus, the lift of the Albanese map is a homomorphism and a
Riemannian submersion $\phi:G\to\rn^n$, where $n=\dim\g/[\g,\g]$.
Since the kernel of $d\phi$ necessarily contains $[\g,\g]$, these
spaces must coincide. This shows that $\phi$, up to a translation
in $\rn^n$, is precisely the map constructed in Theorem
\ref{theo:nilpotent}.

\section{The Solvable Lie Group $S_n$}

The standard example of a solvable Lie group is the subgroup $S_n$
of $\GLR n$ of $n\times n$ upper-triangular matrices.  This
inherits a natural left-invariant Riemannian metric from $\GLR n$.
The connected component $G$ of $S_n$ containing the neutral
element $e$ is given by
$$G=\Bigg\{\begin{pmatrix}
e^{t_1} & x_{12}  & \cdots &   x_{1,n-1} & x_{1n}\\
    0   & e^{t_2} & \cdots &   x_{2,n-1} & x_{2n}\\
 \vdots & \ddots  & \ddots &     \vdots  & \vdots\\
    0   & \cdots  &   0    & e^{t_{n-1}} & x_{n-1,n}\\
    0   & \cdots  &   0    &          0  & e^{t_n}
\end{pmatrix}\in\GLR n \ |\
x_{ij},t_i\in\rn\Bigg\}.$$

The Lie algebra $\g$ of $G$ consist of all upper-triangular
matrices and has the orthogonal decomposition $\g =\d\oplus\n_n$
where $\n_n$ is the Lie algebra for $N_n$ and $\d$ is generated by
the diagonal elements $D_1=E_{11},\dots,D_n=E_{nn}\in\g$.  The
derived algebra $[\g,\g]$ is $\n_n$ and the quotient $\g/[\g,\g]$
can be identified with $\d$. The natural group epimorphism
$\Phi:G\to\rn^n$ is the Riemannian submersion given by
$$\Phi:g\to(t_1,\dots,t_n).$$

\begin{theorem}\label{theo:upper}
Let $G$ be the connected component of the solvable Lie group $S_n$
containing the neutral element $e$ and $\Phi:G\to\rn^{n}$ be the
natural group epimorphism.  Let the vector $\xi\in\cn^n$ be given
by $$\xi=((n+1)-2,(n+1)-4,\dots,(n+1)-2n).$$ If $n\ge 3$, $W$ is a
maximal isotropic subspace of $\cn^n$ and $$V=\{w\in W\ |\
(w,\xi)=0\},$$ then $$\Omega_V=\{\phi_v(x)=(\Phi(x),v)\ |\ v\in
V\}$$ is an orthogonal family of globally defined harmonic
morphisms on $G$.
\end{theorem}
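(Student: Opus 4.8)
The plan is to verify the hypotheses of Theorem~\ref{theo:general} for the group $G$ and then identify the vector $\xi$ explicitly. Since $\Phi:G\to\rn^n$ is the natural group epimorphism associated with $\g/[\g,\g]\cong\d$, and the diagonal elements $D_1,\dots,D_n$ form an orthonormal basis of the horizontal space $\H_e=\d$ with $d\Phi_e(D_i)=e_i$, I would take $\{X_1,\dots,X_n\}=\{D_1,\dots,D_n\}$ in the notation of Theorem~\ref{theo:general}. The problem then reduces entirely to computing $\trace\ad_{D_i}$ for each $i$, which is a finite-dimensional linear-algebra calculation on the Lie algebra $\g$ of upper-triangular matrices.

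First I would compute the adjoint action of $D_i=E_{ii}$ on the basis of $\g$. The diagonal generators commute, so $\ad_{D_i}$ acts trivially on the $\d$-summand and contributes nothing there. On the nilpotent part $\n_n$, a direct bracket computation gives $[E_{ii},E_{rs}]=\delta_{ir}E_{is}-\delta_{is}E_{rs}=\delta_{ir}E_{rs}-\delta_{is}E_{rs}$, so each basis vector $E_{rs}$ with $r<s$ is an eigenvector of $\ad_{D_i}$ with eigenvalue $\delta_{ir}-\delta_{is}$. Summing these eigenvalues over all $r<s$ yields $\trace\ad_{D_i}=\#\{s>i\}-\#\{r<i\}=(n-i)-(i-1)=(n+1)-2i$. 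This gives exactly the stated vector $\xi=((n+1)-2,(n+1)-4,\dots,(n+1)-2n)$, confirming the definition in the statement.

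With $\xi$ identified, the conclusion follows from Theorem~\ref{theo:general} once I check that the isotropic subspace $V=\{w\in W\mid(w,\xi)=0\}$ has dimension at least $2$. Here $W$ is a maximal isotropic subspace of $\cn^n$, so $\dim W=\lfloor n/2\rfloor$, and $V$ is cut out inside $W$ by a single linear condition, hence $\dim V\ge\lfloor n/2\rfloor-1$. I would verify that this is at least $2$ precisely when $n\ge 3$: for $n\ge 5$ we have $\lfloor n/2\rfloor\ge 3$, and the cases $n=3,4$ need the observation that $\lfloor n/2\rfloor=2$ together with a check that $\xi$ does not force $V=W$ trivially. Since $V$ is an intersection of isotropic subspaces it is automatically isotropic, so Theorem~\ref{theo:general} then delivers the orthogonal family $\Omega_V$ of globally defined harmonic morphisms.

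The main obstacle I anticipate is the dimension count for $V$ in the borderline cases $n=3$ and $n=4$, where $\dim W=1$ and $\dim W=2$ respectively. When $n=3$, a maximal isotropic subspace has dimension $1$, so imposing the extra condition $(w,\xi)=0$ would generically cut it down to $0$ unless $W$ itself lies in $\xi^{\perp}$; thus the statement must be understood as asserting the existence of a suitable $W$ for which $\dim V\ge 2$ fails unless one reinterprets the bound, and I would need to examine carefully whether the theorem intends $\dim V\ge 2$ literally or whether the relevant range is effectively $n\ge 5$. Resolving this subtlety—pinning down for which $n$ one can actually arrange $\dim V\ge 2$, and whether the factor-of-two in maximal isotropic dimension leaves room after the single linear constraint—is the delicate point; the trace computation itself is routine.
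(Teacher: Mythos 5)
Your approach is the same as the paper's: identify the horizontal space $\H_e$ with the diagonal subalgebra $\d$, compute $\trace\ad_{D_i}$, and feed the resulting $\xi$ into Theorem~\ref{theo:general}. The trace computation is correct and is exactly the content of the paper's proof; your eigenvalue argument $[E_{ii},E_{rs}]=(\delta_{ir}-\delta_{is})E_{rs}$, summed over $r<s$, is the right justification for $\trace\ad_{D_i}=(n+1)-2i$.

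The point where you stall, the dimension of $V$, is a genuine issue, and you are right to be suspicious: the paper's own proof simply asserts that $V$ is at least two dimensional for every maximal isotropic $W$ whenever $n\ge 3$, and this is false. A maximal isotropic subspace of $\cn^n$ has dimension $\lfloor n/2\rfloor$, so $\dim V\ge\lfloor n/2\rfloor-1$, which is $\ge 2$ only for $n\ge 6$ (note your own arithmetic slip: $\lfloor 5/2\rfloor=2$, so the safe range is $n\ge 6$, not $n\ge 5$). For $n=3$ one has $\dim W=1$, hence $\dim V\le 1$; for $n=4$ one checks that $\dim V=1$ for every maximal isotropic $W$, since $\xi=(3,1,-1,-3)$ is anisotropic and $\xi^\perp$ is a nondegenerate $3$-dimensional space containing no $2$-dimensional isotropic subspace. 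So the hypothesis ``$\dim V\ge 2$'' of Theorem~\ref{theo:general} cannot be met literally for $n=3,4$. The way to close your proof (and to rescue the statement for small $n$) is to observe that the stated conclusion does not actually require $\dim V\ge 2$: the computation in the proof of Theorem~\ref{theo:general} gives $\tau(\phi_v)=(\xi,v)$ and $\kappa(\phi_v,\phi_w)=(v,w)$ for all $v,w\in\cn^n$, so every $\phi_v$ with $v\in V$ is harmonic and horizontally weakly conformal, and any two members of $\Omega_V$ are orthogonal, whatever the dimension of $V$; the dimension hypothesis in Theorem~\ref{theo:general} only guarantees that the family is non-trivial. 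With that observation your argument is complete for all $n\ge 3$, with the caveat that for $n=3,4$ the family $\Omega_V$ may be small or even reduce to constants unless $W$ is chosen to meet $\xi^\perp$ in an isotropic line.
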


\begin{proof}
The subspace $\d$ of $\g$ is the horizontal space $\H_e$ of
$\Phi:G\to\rn^n$ and has the orthonormal basis
$\{D_1,\dots,D_n\}$. For the diagonal elements $D_t$ we have
$$\trace\ad_{D_t}=(n+1)-2t$$ which gives
\begin{eqnarray*}
\xi&=&(\trace\ad_{X_1},\dots,\trace\ad_{X_n})\\
&=&((n+1)-2,(n+1)-4,\dots,(n+1)-2n).
\end{eqnarray*}
This means that for any maximal isotropic subspace $W$ of $\cn^n$
the space $V=\{w\in W\ |\ (w,\xi)=0\}$ is at least two
dimensional.  The result is then a consequence of Theorem
\ref{theo:general}.
\end{proof}

\section{Symmetric Spaces of Rank $r\ge 3$}

In this section we employ Theorem \ref{theo:general} to construct
complex valued harmonic morphisms from Riemannian symmetric spaces
of rank at least 3.  For the details of their structure theory we
refer to \cite{Hel}. In the next section we will construct harmonic
morphisms on symmetric spaces of rank one through a different method.

An irreducible Riemannian symmetric space of
non-compact type may be written as $G/K$ where $G$ is a simple,
connected and simply connected Lie group and $K$ is a maximal compact
subgroup of $G$. We denote by $\g$ and $\k$ the Lie algebras of
$G$ and $K$, respectively, and by $$\g=\k+\p$$ the corresponding Cartan
decomposition of $\g$. According to the Iwasawa decomposition
of $\g$ we have $$\g=\n+\a+\k,$$ where $\a$ is a maximal Abelian
subalgebra of $\p$ and $\n$ a nilpotent subalgebra of $\g$.
Furthermore, the subalgebra $$\s=\n+\a$$ is a solvable subalgebra
of $\g$. On the group level we have similar decompositions
$$G=NAK\ \ \text{and}\ \ S=NA$$ where $N$ is a
normal subgroup of $S$.

As $S$ acts simply transitively on $G/K$ we thus obtain a
diffeomorphism $$G/K\cong S$$ and, as the action of $S$ on
$G/K$ is isometric, the induced metric on $S$ is left-invariant.

Since $[\s,\s]=\n$, we see that the codimension of
the derived algebra of $\s$ is the dimension of $\a$ i.e. the
rank of $G/K$. Hence, when the rank is at least 3, the statement
of Theorem   \ref{theo:general} shows that there exist globally
defined complex valued harmonic morphisms on $G/K$.

\begin{theorem}\label{theo:existence}
Let $(M,g)$ be an irreducible Riemannian symmetric space of
rank at least $3$. Then for each point $p\in M$ there exists a
complex-valued harmonic morphism $\phi:U\to\cn$ defined on an open
neighbourhood $U$ of $p$. If the space $(M,g)$ is of non-compact
type then the domain $U$ can be chosen to be the whole of $M$.
\end{theorem}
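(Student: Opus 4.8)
The plan is to prove the two assertions separately: the global one for non-compact type and the local one for the remaining (compact type) spaces, the latter by reduction to the former. The global case is essentially prepared in the discussion preceding the theorem: via the Iwasawa decomposition one identifies $M=G/K$ isometrically with the solvable group $S=NA$ equipped with a left-invariant metric, whose Lie algebra satisfies $[\s,\s]=\n$, so that $n:=\dim(\s/[\s,\s])=\dim\a$ is the rank $r$. Applying Theorem~\ref{theo:general} to the Riemannian submersion $\Phi:S\to\rn^n$, the computation there shows that, for $v,w\in\cn^n$, the functions $\phi_v=(\Phi,v)$ satisfy $\tau(\phi_v)=(\xi,v)$ and $\kappa(\phi_v,\phi_w)=(v,w)$, where $\xi=(\trace\ad_{X_1},\dots,\trace\ad_{X_n})$. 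Hence any non-zero isotropic $v$ with $(v,\xi)=0$ yields a globally defined complex valued harmonic morphism $\phi_v$ on $M\cong S$.

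The role of the rank hypothesis is thereby reduced to a linear-algebra question over $\cn$. For $X\in\a$ one has $\trace\ad_X=2\rho(X)$, so $\xi$ is a real vector with $(\xi,\xi)=|\xi|^2$; it is therefore either zero or anisotropic. If $\xi\ne 0$, its orthogonal complement $\xi^\perp$ is a non-degenerate complex quadratic space of dimension $n-1$, and over $\cn$ any such space of dimension at least $2$ contains a non-zero isotropic vector. Such a $v$ therefore exists as soon as $n-1\ge 2$, i.e. $r\ge 3$, the case $\xi=0$ being even easier. Choosing $v$ in this way, extending it to a maximal isotropic subspace $W$ and setting $V=\{w\in W\mid(w,\xi)=0\}$, one is exactly in the situation of Theorem~\ref{theo:general}. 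Isolating this equivalence between the rank bound and the solvability of $(v,v)=(v,\xi)=0$ is, to my mind, the conceptual core of the global case.

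For a general irreducible $M$ of rank $r\ge 3$, and in particular for compact type, I would deduce the local statement by duality. Let $M^{*}=G^{*}/K$ be the non-compact dual; it has the same rank, so the previous step produces a global harmonic morphism $\phi^{*}$ on $M^{*}$. Both $M$ and $M^{*}$ are real forms of the common complexification $G^{\cn}/K^{\cn}$, meeting at the base point $eK$, near which each is parametrised real-analytically by $\p$, respectively $i\p$. As $\phi^{*}$ is real-analytic it extends to a holomorphic function on a neighbourhood of $eK$ in $G^{\cn}/K^{\cn}$; since $\tau$ and $\kappa$ are algebraic expressions in the metric, which itself extends to a holomorphic symmetric bilinear form restricting to the metrics of both real forms, the equations $\tau(\phi^{*})=0$ and $\kappa(\phi^{*},\phi^{*})=0$ propagate to the holomorphic extension by the identity theorem. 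Restricting this extension to $M$ produces a harmonic morphism $\phi:U\to\cn$ on a neighbourhood $U$ of $eK$, and homogeneity of $M$ then carries $U$ to a neighbourhood of any prescribed point $p$.

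I expect the duality step to be the main obstacle. One must verify that the operators $\tau$ and $\kappa$ on $M$ and on $M^{*}$ are restrictions of a single holomorphic operator on $G^{\cn}/K^{\cn}$, so that the vanishing of the defining equations on one real form forces it on the other; in doing so the sign reversal of the metric and curvature under $\p\leftrightarrow i\p$ has to be tracked with care to confirm that the complexified equations are literally the same on both sides. Once this identification is in place the analytic continuation is routine, and the global non-compact case needs only the linear-algebra observation of the second paragraph.
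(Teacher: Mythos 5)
Your proposal follows the paper's own route: the global non-compact case via the Iwasawa identification $M\cong S=NA$ and the computation of Theorem~\ref{theo:general} (with $\xi$ built from $\trace\ad$ on $\a$, and the existence of a non-zero isotropic $v\perp\xi$ explaining the bound $r\ge 3$), and the compact case via the duality principle, which the paper simply cites from \cite{Gud-Sve-1} rather than re-deriving. The argument is correct; you merely make explicit details the paper leaves implicit or outsources, namely the linear algebra in $\xi^{\perp}$ (which in fact repairs the slightly too strong hypothesis $\dim V\ge 2$ of Theorem~\ref{theo:general}, since a single isotropic $v$ with $(v,\xi)=0$ already yields a harmonic morphism) and the holomorphic-extension mechanics of the duality step.
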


\begin{proof}
As for the compact situtation, the duality principle decscribed
in \cite{Gud-Sve-1} gives us locally defined complex valued harmonic
morphisms on the irreducible Riemannian symmetric spaces
of compact type with rank at least $3$.
\end{proof}

Theorem \ref{theo:existence} gives the first known examples of
complex-valued harmonic morphisms from the non-compact classical
Riemannian symmetric spaces
\begin{gather*}
\SOO{p}{p}/\SO p\times\SO p,\\
\SOO{p}{p+1}/\SO p\times\SO{p+1},\\
\Spp{p}{p}/\Sp p\times\Sp p
\end{gather*}
with $p\geq 3$ and their compact duals. As for the exceptional
symmetric spaces, the result gives the first known examples from
the non-compact
\begin{gather*}
E_6^2/\SU 6\times\SU 2,\\
E_7^{-1}/\SO{12}\times\SU 2,\\
E_8^{-24}/E_7\times\SU 2,\\
F_4^4/\Sp 3\times\SU 2,\\
E_6^6/\Sp 4,\\
E_8^8/\SO{16}.
\end{gather*}
and their compact duals.

\section{The Second Construction}

In this section we introduce a new method for constructing complex valued 
harmonic morphisms from a certain class of solvable Lie groups.  This provides 
solutions on any Damek-Ricci space and many Riemannian symmetric spaces.

Let $\s$ be a Lie algebra which can be decomposed as the
sum of two subalgebras $$\s=\n+\a,\quad \n\cap\a=\{0\},$$ where $\n$ is a
nilpotent ideal and $\a$ is Abelian. Furthermore let $\s$ be
equipped with an inner product for
which $\n$ and $\a$ are orthogonal and the adjoint action of $\a$ on
$\n$ is self-adjoint with respect to this inner product. Thus, we have a
subset of ``roots'' $\Sigma$ in the dual space $\a^*$ of $\a$ and an
orthogonal
decomposition $$\n=\sum_{\alpha\in\Sigma}\n_\alpha,\ \text{  where }\
[V,X]=\alpha(V)X$$ for any $V\in\a$ and $X\in\n_\alpha$. Finally, we
assume that there is at least one root $\beta\in\Sigma$ with the
property that $$\n_\beta\perp [\n,\n].$$ We write
$\Sigma^*=\Sigma\setminus\{\beta\}$ and put
$$\m=\sum_{\alpha\in\Sigma^*}\n_\alpha.$$

Let $S$ be a simply connected Lie group with Lie algebra $\s$ and
$A$, $N$ and $M$ be the Lie subgroups of $S$ with Lie algebras $\a$,
$\n$ and $\m$, respectively. Then $S=N\cdot A$ and $M$ is a closed normal
subgroup of $N$ with $N/M$ Abelian and $A\cdot M=M\cdot A$. Furthermore,
$A$, $M$ and $N$ are
simply connected, hence so is the quotient $N/M$.

We equip $S$ with the left-invariant Riemannian metric induced by
the inner product on $\s$ and give $N/M$ the unique left-invariant metric for
which the homogeneous projection $N\to N/M$ is a Riemannian
submersion.

\begin{theorem} For the above situation the map $$\Psi:S=N\cdot A\to N/M,\quad
\Psi(n\cdot
  a)=n\cdot M$$ is a harmonic morphism.
\end{theorem}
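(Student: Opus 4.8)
The plan is to exhibit $\Psi$ as a horizontally (weakly) conformal, harmonic submersion and then invoke the Fuglede--Ishihara characterisation; by the Baird--Eells result (Theorem \ref{theo:B-E}) harmonicity will reduce to showing that $\Psi$ is horizontally homothetic with minimal fibres. Note first that, since $N/M$ is a simply connected Abelian group carrying a left-invariant metric, it is isometric to a flat Euclidean space, and that the fibre of $\Psi$ over a coset $nM$ is the left coset $n\cdot L$ of the closed subgroup $L=MA$, whose Lie algebra is $\m+\a$.

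First I would establish horizontal conformality by differentiating $\Psi$ along left-invariant fields. Writing a point as $p=n\cdot a$ and using that $N$ is normal and each root space is $\Ad(A)$-invariant, one finds that $d\Psi_p$ annihilates $dL_p(\m+\a)$ and sends a left-invariant field $X\in\n_\beta$ to $e^{\beta(\log a)}$ times its image under the Riemannian submersion $\pi:N\to N/M$ defining the target metric. Since the metric on $S$ is left-invariant, the decomposition $\s=\n_\beta\oplus(\m+\a)$ is orthogonal (the self-adjoint operators $\ad_V$, $V\in\a$, have distinct real eigenvalues on distinct root spaces, and $\n\perp\a$ by hypothesis), so the horizontal space is exactly $dL_p(\n_\beta)$ and $d\Psi_p$ restricted to it is $\lambda(p)$ times a linear isometry, with dilation $\lambda(p)=e^{\beta(\log a)}$. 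Thus $\Psi$ is a horizontally conformal submersion. Because moving $p$ in a horizontal direction leaves the $A$-component $a$ unchanged, $\lambda$ is constant along $\H$, i.e.\ $\Psi$ is horizontally homothetic.

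The crux is to show that the fibres are minimal. By left-invariance all fibres are isometric to $L$, so it suffices to check that $L$ has vanishing mean curvature at the identity. The Koszul formula for left-invariant fields gives $\ip{\nabla_U U}{V}=\ip{[V,U]}{U}$ for $U\in\m+\a$ and $V\in\n_\beta$, so the mean curvature in the direction $V$ equals the trace of $U\mapsto P_{\m+\a}[V,U]$ on $\m+\a$. The contribution of $\a$ vanishes, because $[V,A]=-\beta(A)V\in\n_\beta$ is orthogonal to $\m+\a$; hence only $\trace(P_\m\,\ad_V|_\m)$ survives. I would then evaluate this using two facts: $\ad_V$ is nilpotent on $\n$ (as $\n$ is nilpotent), so $\trace(\ad_V|_\n)=0$; and the diagonal $\n_\beta$-part of this trace, $\sum_W\ip{[V,W]}{W}$ with $W\in\n_\beta$, vanishes because $[V,W]\in[\n,\n]\perp\n_\beta$. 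Subtracting, $\trace(P_\m\,\ad_V|_\m)=0$ for every $V\in\n_\beta$, so the fibres are minimal.

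The main obstacle is precisely this minimality computation, which is exactly where the standing hypothesis $\n_\beta\perp[\n,\n]$ is essential. With horizontal homotheticity and minimal fibres in hand, the standard formula for the tension field of a horizontally conformal map (equivalently Theorem \ref{theo:B-E}) yields $\tau(\Psi)=0$ in every target dimension; combined with horizontal conformality, the Fuglede--Ishihara characterisation then shows that $\Psi$ is a harmonic morphism.
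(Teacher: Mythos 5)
Your proposal is correct and follows essentially the same route as the paper: identify the vertical space with the left translates of $\a+\m$ and the horizontal space with those of $\n_\beta$, compute the dilation $e^{2\beta(V)}$ to obtain horizontal homothety, and reduce minimality of the fibres to the single fibre $M\cdot A$ through the identity via the Koszul formula. The only (harmless) divergence is in the last step: the paper kills the $\m$-contribution to the mean curvature term by term using $[\n_\beta,\n_\alpha]\subseteq\n_{\alpha+\beta}\perp\n_\alpha$, whereas you use a trace argument combining the nilpotency of $\ad_X$ on $\n$ with the hypothesis $\n_\beta\perp[\n,\n]$ --- both are valid.
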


\begin{proof} Denote by $o=eM$ the identity coset of $N/M$.
The fibre of $\Psi$ over a point $n\cdot o$ is the
set $n\cdot M\cdot A=n\cdot A\cdot M$. Thus, the vertical space of
$\Psi$ at the point $n\cdot a$ is the left translate of $\a+\m$,
i.e.  $$\V_{n\cdot a}=d(L_{n\cdot a})_e(\a+\m);$$ the horizontal
space is therefore the left translate of $\n_\beta$:
$$\H_{n\cdot a}=d(L_{n\cdot a})_e(\n_\beta).$$ This means that for
$X\in\n_\beta$ we have
\begin{equation*}
\begin{split}
d\Psi_{n\cdot a}(d(L_{n\cdot a})_e(X))=&\frac{d}{dt}\big\vert_{t=0}
\Psi(n\cdot a\cdot\exp(tX))\\
=&\frac{d}{dt}\big\vert_{t=0}n\cdot\exp(t\Ad_a(X))\cdot o\\
=&d(L_n)_o(\Ad_a(X))
\end{split}
\end{equation*}
Now, there is a unique element $V\in\a$ such that $a=\exp(V)$. Thus,
$$\|d\Psi_{n\cdot a}(d(L_{n\cdot
  a})_e(X))\|^2=\|\Ad_a(X)\|^2=\|e^{\ad(V)}(X)\|^2=e^{2\beta(V)}\|X\|^2.$$ This
implies that $\Psi$
is horizontally conformal with dilation $$\lambda^2(n\cdot
a)=e^{2\beta(V)}$$ so in particular, $\Psi$ is horizontally
homothetic.  This means that for proving that $\Psi$
is a harmonic morphism it is sufficient to show that it has minimal
fibres.  In our special situation it is even enough to show that the
fibre over $o\in N/M$ is minimal.  Since
$\Psi^{-1}(o)=M\cdot A$ is a subgroup of $S$, it even sufficies to check
that it is minimal at the identity element $e\in S$.

Let $\{f_i\}$ be an orthonormal basis of $\a$ and for any
$\alpha\in\Sigma^*$ let
$\{e_k^\alpha\}$ be an orthonormal basis for $\n_\alpha$. Then
$\{f_i,e_k^\alpha\}$ is an orthonormal basis for the sum $\a+\m$.
Let $H$ be the mean curvature vector field along the fibre $\Psi^{-1}(o)$ and
$X\in\n_\beta$ as above, then
\begin{equation*}
\begin{split}
\ip HX=&\sum_{i}\ip{\nabla_{f_i}f_i}{X}+\sum_{\alpha\in\Sigma^*}
\sum_k\ip{\nabla_{e_k^\alpha}e_k^\alpha}{X}\\
=&\sum_i\ip{[X,f_i]}{f_i}
+\sum_{\alpha\in\Sigma^*}\sum_k\ip{[X,e_k^\alpha]}{e_k^\alpha}\\
=&0,
\end{split}
\end{equation*}
since $[X,f_i]\in\n_\beta\perp\a$ and
$[X,e_k^\alpha]\in\n_{\beta+\alpha}\perp\n_\alpha$.
\end{proof}

Note that, since $N/M$ is Abelian and simply connected, there is an isometric group
isomorphism $$N/M\cong\rn^k,$$ where $k=\dim\n_\beta$.

\begin{example} Consider an irreducible Riemannian symmetric space $G/K$ of
non-compact type. We recall again the Iwasawa decomposition $$G=N\cdot A\cdot
K,\quad \g=\n+\a+\k$$ of the group $G$ and its Lie algebra $\g$. Choose a root space
$\n_\beta\subset\n$ associated to a simple, restricted root $\beta$. The above
construction now yields a harmonic morphism $$G/K\cong N\cdot A\to N/M\cong\rn^k,$$
where $k=\dim\n_\beta$. Thus, whenever $\beta$ can be chosen with $k\geq2$, there
are globally defined, complex-valued harmonic morphisms on $G/K$, regardless of the
rank. Besides giving new examples on some of the spaces considered in the previous
section, we also find the first known complex valued harmonic morphisms on the
spaces $$\Spp{2}{2}/\Sp{2}\times\Sp{2},\quad E_6^{-26}/F_4,\quad
F_4^{-20}/\Spin{9},$$
and their compact duals.
\end{example}

\begin{example} Recall that a \emph{Damek-Ricci space} is a simply connected
solvable Lie group $S$ with a left-invariant metric, such that its Lie algebra is an
orthogonal direct sum $$\s=\v+\z+\a,$$ where $\n=\v+\z$ is a  generalized Heisenberg
algebra with center $\z$, $\a$ is 1-dimensional with a fixed, non-zero vector
$A\in\a$, and
$$[A,V]=\frac{1}{2}V,\quad [A,Y]=Y\qquad(A\in\a,\ V\in\v,\ Y\in\z).$$  Clearly,
these spaces generalize non-compact Riemannian symmetric spaces of rank one. They
are irreducible Riemannian harmonic manifolds, and were introduced by Damek and
Ricci to provide counterexamples to the conjecture by Lichnerowicz, stating that any
Riemannian harmonic manifold is 2-point symmetric \cite{DaRi}. For more information
about Damek-Ricci spaces and generalized Heisenberg algebras, we refer to
\cite{BeTrVa}.

Let $A$, $N$, $Z$ be the subgroups of $S$ with Lie algebras $\a$, $\n$ and $\z$,
respectively. The above construction gives us a harmonic morphism $$\varphi:S=N\cdot
A\to N/Z\cong\rn^k,$$ where $k=\dim\v$.
\end{example}

\section{3-dimensional Solvable Lie Groups}

In this section we study the structure of conformal foliations by
geodesics on $3$-dimensional solvable Lie groups. We start by
reviewing some general terminology.

Assume that $\V$ is an involutive distribution on a
Riemannian manifold $(M,g)$ and denote by $\H$ its orthogonal
complement. As customary, we also use $\V$ and $\H$ to denote the
orthogonal projections onto the corresponding subbundles of $TM$
and we identify $\V$ with the corresponding foliation tangent to
$\V$. The second fundamental form for $\V$ is given by
$$B^\V(U,V)=\frac 12\H(\nabla_UV+\nabla_VU)\qquad(U,V\in\V),$$
while the second fundamental form for $\H$ is given by
$$B^\H(X,Y)=\frac{1}{2}\V(\nabla_XY+\nabla_YX)\qquad(X,Y\in\H).$$
Recall that $\V$ is said to be \emph{conformal} if there is a
vector field $V$, tangent to $\V$, such that $$B^\H=g\otimes V,$$ and
$\V$ is said to be \emph{Riemannian} if $V=0$.
Furthermore, $\V$ is said to be \emph{totally geodesic} if
$B^\V=0$. This is equivalent to the leaves of $\V$ being totally
geodesic submanifolds of $M$.

It is easy to see that the fibres of a horizontally conformal
map (Riemannian submersion) give rise to a conformal (Riemannian)
foliation. Conversely, any conformal (Riemannian) foliation is
locally the fibres of a horizontally conformal map (Riemannian submersion),
see \cite{Bai-Woo-book}. When the codimension of the foliation is
$2$, the map is harmonic if and only if the leaves of the
foliation are minimal submanifolds.

Before restricting ourselves to the 3-dimensional situation,
we note the following result.

\begin{proposition}\label{prop:center}
Let $G$ be a Lie group with Lie algebra $\g$ and a
left-invariant metric. Then the left-translation of any
subspace of the centre of $\g$ generates a totally geodesic
Riemannian foliation of $G$.
\end{proposition}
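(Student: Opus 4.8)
The plan is to work entirely with left-invariant vector fields and to reduce everything to the Koszul formula for the left-invariant metric $g$. Fix a subspace $\V_e$ of the centre $\z=\z(\g)$, let $\V$ denote its left-invariant extension (a distribution on $G$), and write $\H$ for its orthogonal complement. Choose an orthonormal basis $\{U_1,\dots,U_k\}$ of $\V_e$, so that each $U_i\in\z$, and complete it to an orthonormal basis of $\g$ by vectors $\{X_1,\dots,X_m\}$ spanning $\H_e$, regarding all of these as left-invariant vector fields on $G$. Since the $U_i$ are central we have $[U_i,U_j]=0$, so $\V$ is involutive and hence, by Frobenius, integrates to a foliation of $G$; this is what makes $\V$ a genuine foliation in the first place. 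For left-invariant fields the inner products are constant, so the Koszul formula collapses to
\begin{equation*}
2g(\nabla_XY,Z)=g([X,Y],Z)-g([Y,Z],X)+g([Z,X],Y),
\end{equation*}
in agreement with the convention used earlier in the paper. The single observation driving the whole argument is that any bracket with a central argument vanishes: $[U,W]=0$ for every $U\in\V_e$ and every $W\in\g$.

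To prove that the foliation is totally geodesic I would show the stronger statement $\nabla_{U_i}U_j=0$. Applying the collapsed Koszul formula with $X=U_i$, $Y=U_j$ and $Z$ arbitrary, each of the three terms on the right involves a bracket containing $U_i$ or $U_j$ and therefore vanishes by centrality. Hence $\nabla_{U_i}U_j=0$ for all $i,j$, so in particular the horizontal projection of $\nabla_{U_i}U_j+\nabla_{U_j}U_i$ is zero. Thus $B^\V=0$ and the leaves are totally geodesic submanifolds.

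To prove that the foliation is Riemannian I must check $B^\H=0$, i.e. that $g(\nabla_XY+\nabla_YX,U)=0$ for all horizontal $X,Y$ and all $U\in\V_e$. Placing the central vector $U$ in the slot $Z$ of the Koszul formula, the second and third terms vanish, as they are brackets containing $U$, leaving $2g(\nabla_XY,U)=g([X,Y],U)$. Symmetrising in $X$ and $Y$ gives $2g(\nabla_XY+\nabla_YX,U)=g([X,Y]+[Y,X],U)=0$. Hence $B^\H=0$ and the foliation is Riemannian.

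The argument is purely computational and I expect no genuine obstacle: the entire content is that centrality forces every relevant bracket to vanish, killing all the terms in the Koszul formula that could contribute to $B^\V$ or $B^\H$. The only points requiring a little care are to justify the collapsed form of the Koszul formula for a left-invariant metric and to record at the outset that involutivity of $\V$, again immediate from $[U_i,U_j]=0$, is what guarantees that $\V$ is a foliation.
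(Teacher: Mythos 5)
Your proof is correct and follows essentially the same route as the paper: both reduce $B^\V$ and $B^\H$ to the Koszul formula for left-invariant fields and observe that every bracket involving a central vector vanishes, with the remaining $[X,Y]+[Y,X]$ term in $B^\H$ cancelling by antisymmetry. The only (harmless) differences are that you prove the slightly stronger fact $\nabla_{U_i}U_j=0$ and explicitly record involutivity, which the paper leaves implicit.
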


\begin{proof}
Let $\V$ be the foliation thus obtained and $\H$ its
orthogonal complement. If $U$ and $V$ are left-invariant
vector field in $\V$ and $X$ and $Y$ left-invariant in $\H$
then clearly
$$\ip{B^\V(U,V)}{X}=\frac{1}{2}(\ip{[U,V]}{X}
+\ip{[X,U]}{V}-\ip{[V,X]}{U})=0$$
and
\begin{equation*}
\begin{split}
\ip{B^\H(X,Y)}{U}=&\frac{1}{2}(\ip{\nabla_XY}{U}+\ip{\nabla_YX}{U})\\
=&\frac{1}{4}(\ip{[X,Y]}{U}+\ip{[U,X]}{Y}-\ip{[Y,U]}{X}\\
&+\ip{[Y,X]}{U}+\ip{[U,Y]}{X}-\ip{[X,U]}{Y})=0.
\end{split}
\end{equation*}
Thus $\V$ is Riemannian and totally geodesic.
\end{proof}

\begin{example} The solvable Lie algebra $$\g=\Bigg\{
\begin{pmatrix} t_1 & x \\ 0 & t_2\end{pmatrix}
\Bigg|\ t_1,t_2,x\in\rn\Bigg\}$$
has a $1$-dimensional center. Thus the corresponding simply connected
Lie group $S_2$ admits a Riemannian foliation by geodesics, regardless of
which left-invariant metric we equip it with.
\end{example}

The following result shows that the 3-dimensional situation is
very special with respect to conformal foliations by geodesics.

\begin{theorem}\cite{Bai-Woo-book}\label{thm:uniqueness}
Let $M$ be a $3$-dimensional Riemannian manifold with
non-constant sectional curvature. Then there are at most two
distinct conformal foliations by geodesics of $M$. If there is
an open subset on which the Ricci tensor has precisely two distinct
eigenvalues, then there is at most one conformal foliation by
geodesics of $M$.
\end{theorem}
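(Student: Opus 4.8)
The plan is to encode a conformal foliation by geodesics by the unit vector field tangent to its leaves and to recast the two defining conditions as a single first-order system whose integrability obstruction is an algebraic, curvature-dependent equation constraining the direction pointwise. Since $\dim M=3$, a foliation by geodesics has one-dimensional leaves and codimension two; let $U$ be a local unit section of $\V$, determined up to sign, and set $\H=U^{\perp}$. The leaves are geodesics exactly when $\nabla_U U=0$, and from $\ip{B^{\H}(X,Y)}{U}=-\tfrac12(\ip{\nabla_X U}{Y}+\ip{\nabla_Y U}{X})$ one sees that conformality $B^{\H}=g\otimes V$ is equivalent to the vanishing of the symmetric trace-free part of $\nabla U$ restricted to $\H$. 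Thus $\V$ is a conformal foliation by geodesics precisely when $U$ generates a \emph{shear-free geodesic congruence}, and the whole problem reduces to counting such congruences.

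First I would pass to a twistorial formulation. At each point the oriented unit directions form a sphere, which I identify with the $\P^1$ of isotropic lines in $T^{\cn}M$ via $U\mapsto[X+iY]$ for a positively oriented orthonormal frame $X,Y$ of $\H$; a co-oriented foliation is then a section of this $\P^1$-bundle, recorded by a single complex fibre coordinate $w$. Writing $\nabla_U U=0$ together with the shear-free condition as equations for $w$ and its first derivatives gives a first-order horizontally holomorphic system; differentiating once more and commuting covariant derivatives via the Ricci identity eliminates the derivatives of $w$ and leaves a purely algebraic equation $Q(w)=0$ whose coefficients are components of the curvature. In dimension three the curvature is carried by the Ricci tensor, and the trace contribution drops out, so $Q$ is built from the trace-free Ricci tensor $\operatorname{Ric}_0$; the decisive computation is that, as a condition on the real direction $U$, it is \emph{quadratic}. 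Granting this, at each point where $\operatorname{Ric}_0\neq0$ there are at most two admissible directions, and when $\operatorname{Ric}_0$ has a repeated eigenvalue—i.e. when the Ricci tensor has exactly two distinct eigenvalues—the axial symmetry of $\operatorname{Ric}_0$ forces $Q$ to have a double root, leaving at most one. Non-constant sectional curvature guarantees $\operatorname{Ric}_0\neq0$ on a dense open set, since in dimension three an Einstein metric has constant curvature.

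Finally I would promote the pointwise bounds to bounds on the foliations themselves. If there were three distinct conformal foliations by geodesics with directions $U_1,U_2,U_3$, then on the dense open set where $Q$ has at most two roots each point lies in one of the three closed sets $\{U_i=U_j\}$; by a Baire-category argument one of them, say $\{U_1=U_2\}$, has non-empty interior, and since two shear-free geodesic congruences agreeing on an open set agree wherever both are defined (unique continuation for the first-order system, or ODE uniqueness along the geodesic leaves), we get $\V_1=\V_2$, a contradiction. The same scheme cuts three-or-more down to two, and two-or-more down to one in the two-eigenvalue case. The main obstacle is the middle step: deriving $Q(w)=0$ in closed form and verifying both that it is genuinely quadratic in the real direction and that the two-distinct-eigenvalue hypothesis turns it into a perfect square. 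This is exactly where the special structure of three dimensions must be used, and where the bookkeeping of the Ricci identity and the twistor coordinate has to be carried out with care.
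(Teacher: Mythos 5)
First, a remark on the comparison itself: the paper offers no proof of this statement --- it is quoted from the Baird--Wood monograph \cite{Bai-Woo-book} --- so your sketch can only be measured against the standard argument of the cited source. Your outline does follow that strategy: identify a conformal foliation by geodesics with a shear-free geodesic unit vector field $U$ (equivalently a section of the bundle of isotropic lines in $\H^{\cn}$, $\H=U^{\perp}$), show that the curvature imposes a pointwise algebraic constraint on the admissible directions, and count. The architecture is right.

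The genuine gap is that the constraint --- your $Q(w)=0$ --- is never produced, and you say so yourself (``the main obstacle is the middle step''). That step is where the whole theorem lives. The identity one actually needs is that the shear-free geodesic conditions, fed through the Ricci identity, force $\operatorname{Ric}(E,E)=0$ for $E=X-iY$ spanning the isotropic horizontal line; equivalently, the Ricci tensor restricted to $\H$ must be a multiple of the induced metric. Granting this, the count is elementary linear algebra rather than root-counting of a quadratic: one needs a $2$-plane $U^{\perp}$ on which $\operatorname{Ric}-c\,g$ vanishes as a bilinear form for some $c$. If $\operatorname{Ric}$ has three distinct eigenvalues $\lambda_1<\lambda_2<\lambda_3$ this forces $c=\lambda_2$ and yields exactly the two null $2$-planes of $\mathrm{diag}(\lambda_1-\lambda_2,0,\lambda_3-\lambda_2)$; if it has precisely two distinct eigenvalues it yields the single plane orthogonal to the simple eigendirection; and in dimension $3$ the Einstein case is the constant-curvature case, which is excluded. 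Note in particular that the admissible $U$ are in general \emph{not} Ricci eigenvectors, so the ``axial symmetry forces a double root'' claim, while true in outcome, cannot be certified without the explicit form of $Q$. Without deriving the identity you can verify neither the degree of $Q$ nor its degeneration, so the proof is incomplete at its crux.

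A second, smaller defect is the globalization. Non-constant sectional curvature gives, via Schur's lemma, a nonempty open set on which the trace-free Ricci tensor is nonzero, but not a dense one, so the Baire argument as you state it does not cover $M$. More seriously, the reduction from ``at most two admissible directions at each point of some open set'' to ``at most two foliations of $M$'' genuinely requires the unique-continuation statement you assert --- that two conformal foliations by geodesics agreeing on an open set agree wherever both are defined --- and this is not an off-the-shelf fact for an overdetermined first-order system on a merely smooth metric; ``ODE uniqueness along the leaves'' propagates agreement only along leaves, not transversally. This step needs either the twistorial holomorphicity you allude to (which is exactly what requires the deferred computation) or a separate argument.
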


This result can now be applied to the situation of $3$-dimensional
Lie groups with left-invariant metrics.

\begin{proposition}\label{prop:foliation}
Let $G$ be a connected $3$-dimensional Lie group with a
left-invariant metric of non-constant sectional curvature. Then any
local conformal foliation by geodesics of a connected open
subset of $G$ can be extended to a global conformal foliation by
geodesics of $G$.  This is given by the left-translation of a
1-parameter subgroup of $G$.
\end{proposition}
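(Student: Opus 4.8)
The plan is to show that the tangent line field $\ell$ of the given local conformal foliation $\V$ on $W$ is left-invariant; once this is established, $\ell$ is spanned by a left-invariant vector field $X\in\g$ and integrates to the foliation of $G$ by left cosets of the $1$-parameter subgroup $\exp(\rn X)$, which is the asserted global extension.

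First I would record two standing facts. Since the metric is left-invariant, every left translation $L_g$ is an isometry of $G$, and isometries send conformal foliations by geodesics to conformal foliations by geodesics, because the defining conditions $B^\V=0$ and $B^\H=g\otimes V$ are preserved under pushforward by an isometry. Second, $G$ is homogeneous under left translations, so the set of sectional curvatures attained on $2$-planes is the same at every point; as the sectional curvature of $G$ is non-constant, it is already non-constant on the tangent space at each point, hence on every non-empty open subset. Consequently Theorem \ref{thm:uniqueness} applies to every connected open subset $U'\subseteq G$ and tells us that $U'$ carries at most two conformal foliations by geodesics.

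The crucial, and hardest, step is to promote this finiteness into local left-invariance of $\ell$. Fix $p\in W$ and a small ball $B$ with $\overline B\subset W$. For $g$ close enough to $e$ one has $g^{-1}B\subset W$, so the pushforward $L_g\V$ is defined on $B$ and is therefore one of the at most two conformal foliations by geodesics of $B$. If there is only one, then $L_g\V=\V$ on $B$ and there is nothing to prove; otherwise the two foliations have distinct tangent lines at some point $x^\ast\in B$, and the assignment $g\mapsto d(L_g)_{g^{-1}x^\ast}\bigl(\ell_{g^{-1}x^\ast}\bigr)$, the tangent line of $L_g\V$ at $x^\ast$, is continuous in $g$ and takes values in a two-point set. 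A continuous map from a connected neighbourhood of $e$ into a discrete set is constant, so it keeps the value it has at $g=e$; hence $L_g\V=\V$ on $B$ for all $g$ near $e$. Writing $\ell_x$ for the tangent line at $x$, this reads $\ell_x=d(L_g)_{g^{-1}x}(\ell_{g^{-1}x})$ for $x\in B$ and $g$ near $e$.

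To globalise, consider the map $f$ sending $x\in W$ to the line $f(x)=d(L_{x^{-1}})_x(\ell_x)$ in $\g$; left-invariance of $\ell$ is exactly the constancy of $f$. The local identity above yields $f(x)=f(g^{-1}x)$ for all $x$ near $p$ and $g$ near $e$, so $f$ is constant on a neighbourhood of each point of $W$. Since $W$ is connected, $f$ is globally constant, say $f\equiv\rn X$ with $X\in\g$, and therefore $\ell_x=d(L_x)_e(\rn X)$ throughout $W$. Thus $\V$ is cut out on $W$ by the left-invariant distribution spanned by $X$. This distribution is defined on all of $G$ and, being one-dimensional, is integrable; its leaves are the left cosets $g\cdot\exp(\rn X)$ of the $1$-parameter subgroup generated by $X$. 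The conditions defining a conformal foliation by geodesics are left-invariant, and they hold at $p$ because there the distribution coincides with $\V$; hence they hold everywhere, so this coset foliation is a global conformal foliation by geodesics. As it restricts to $\V$ on $W$, it is the required extension, and it is visibly the left-translation of a $1$-parameter subgroup of $G$.
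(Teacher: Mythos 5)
Your proof is correct and follows essentially the same route as the paper's: Theorem \ref{thm:uniqueness} combined with a continuity-and-connectedness argument forces the tangent line field of $\V$ to be left-invariant near each point, after which the global left-invariant extension is automatically totally geodesic and conformal because its second fundamental forms are left-invariant tensors vanishing (respectively, conformal) on the original domain. You make explicit two points the paper compresses into ``by Theorem \ref{thm:uniqueness} and by continuity'' --- namely that homogeneity forces the sectional curvature to be non-constant on every open subset, and the discrete-valued continuity argument at a point where the two candidate foliations differ --- but this is elaboration, not a different method.
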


\begin{proof}Assume that $\V$ is a conformal foliation by geodesics
of some connected neighbourhood $U$ of the identity element $e$ of
$G$ and denote by $\H$ the orthogonal complement of $\V$. Let
$U'\subset U$ be a connected neighbourhood of $e$ such that $gh\in
U$ for all $g,h\in U'$, and let $U''\subset U'$ be a connected
neigbourhood of $e$ for which $g^{-1}\in U'$ for all $g\in U''$.

For any $g\in G$, we denote by $L_g:G\to G$ left translation by
$G$. Take $g\in U''$ and consider the distribution
$dL_g\V\big\vert_{U'}$, obtained by restricting $\V$ to $U'$ and
translating with $g$. As $L_g$ is an isometry, this is also a
conformal foliation by geodesics of $L_g U'$, which is a connected
neigbourhood of $e$. It is clear from Theorem \ref{thm:uniqueness}
and by continuity, that this distribution must coincide with $\V$
restricted to $L_g U'$. It follows that $d(L_g)_h(\V_h)=\V_{gh}$
for all $g,h\in U''$. In particular we have
$$d(L_g)_e(\V_e)=\V_g\qquad(g\in U'').$$

Define a 1-dimensional distribution $\tilde\V$ on $G$ by
$$\tilde\V_g=(dL_g)_e(\V_e)\qquad(g\in G).$$ Its horizontal
distribution $\tilde\H$ is clearly given by left translation of
$\H_e$. From the above we see that
$$\tilde\V\big\vert_{U''}=\V\big\vert_{U''}.$$ It follows that
$$B^{\tilde\V}\big\vert_{U''}=B^{\V}\big\vert_{U''}=0,$$ and since
$\tilde\V$ is left-invariant, it follows that $B^{\tilde\V}=0$
everywhere, i.e., $\tilde\V$ is totally geodesic. In the same way
we see that $\tilde\V$ is a conformal distribution and, by Theorem
\ref{thm:uniqueness}, we see that $$\tilde\V\big\vert_{U}=\V.$$

This shows that $\V$ extends to a global conformal, totally
geodesic distribution $\tilde\V$, which is left-invariant. By
picking any unit vector $V\in \V_e$, we see that the corresponding
foliation is given by left translation of the 1-parameter subgroup
generated by $V$.
\end{proof}

\begin{example}\label{ex:conformal}
Fix two real numbers $\alpha,\beta$, not both zero, and let
$\g=\h\ltimes\rn^2$, where $\h\subset\glr{2}$ is the real span
of the matrix $$e_1=\begin{pmatrix} \alpha & -\beta \\ \beta &
  \alpha\end{pmatrix}.$$ Thus, if $e_2,e_3$ is
the standard basis for $\rn^2$, we have the commutator relations
$$[e_1,e_2]=\alpha e_2+\beta e_2,\ [e_1,e_3]=-\beta e_2+\alpha
e_3,\ [e_2,e_3]=0.$$
Hence $\g$ is solvable and centerless. The corresponding simply
connected Lie group is given by $$
G=\Bigg\{\begin{pmatrix} e^{t\alpha}\cos(t\beta) &
  -e^{t\alpha}\sin(t\beta) & x \\
e^{t\alpha}\sin(t\beta) & e^{t\alpha}\cos(t\beta) & y \\
0 & 0 & 1 \end{pmatrix}\ \Bigg|\ t,x,y\in\rn\Bigg\}.$$ Let $e_2,e_3$ be
the standard basis on $\rn^2$ and choose the left-invariant metric
for which $e_1,e_2,e_3$ is an orthonormal basis for $\g$. By
identifying $G$ with $\rn^3$ in
the obvious way, we see that this metric is given by $$dt^2
+e^{-2t\alpha}dx^2+e^{-2t\alpha}dy^2.$$
A simple calculation shows that this metric has constant
sectional curvature $-\alpha^2$. It is also
easy to see that left translation of $e_1$ generates a conformal
foliation by geodesics on $G$.
\end{example}

\begin{theorem}\label{theo:solv}
Let $\g$ be a $3$-dimensional centerless, solvable Lie algebra
and $G$ a connected Lie group with Lie algebra $\g$ and a
left-invariant metric. Let $\V$ be a local conformal foliation 
by geodesics on $G$. Then $G$ has constant sectional curvature.
\end{theorem}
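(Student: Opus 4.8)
The plan is to argue by contradiction. Suppose $G$ does \emph{not} have constant sectional curvature. Then Proposition~\ref{prop:foliation} applies and tells us that the local foliation $\V$ extends to a global conformal, totally geodesic, left-invariant foliation, given by the left-translation of a $1$-parameter subgroup. In particular $\V_e=\rn V$ for some unit vector $V\in\g$, and the whole problem becomes algebraic: I must show that a centerless solvable $3$-dimensional metric Lie algebra carrying such a $V$ cannot have non-constant curvature.

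First I would set $\H_e=V^\perp$ and fix an orthonormal basis $\{V,X,Y\}$ with $X,Y\in\H_e$. Using the Koszul formula for left-invariant fields (as already recorded in Section~3, where $\langle\cdot,\cdot\rangle$-products of invariant fields are constant), I would translate the two geometric conditions on the extended foliation into conditions on the brackets. The geodesic condition $\nabla_V V=0$ amounts to $\ip{[V,X]}{V}=\ip{[V,Y]}{V}=0$, so $\ad_V$ preserves $\H_e$. The conformal condition $B^\H=g\otimes W$ then forces $\ip{[V,X]}{X}=\ip{[V,Y]}{Y}$ and $\ip{[V,X]}{Y}+\ip{[V,Y]}{X}=0$; that is, the matrix of $\ad_V|_{\H_e}$ is conformal. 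Writing this out, the brackets must take the form $[V,X]=aX+bY$, $[V,Y]=-bX+aY$ and $[X,Y]=pV+qX+rY$ for scalars $a,b,p,q,r$.

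Next I would impose the remaining structural constraints. The Jacobi identity on the triple $(V,X,Y)$ yields the relations $ap=0$, $aq+br=0$ and $bq-ar=0$. Since $\g$ is centerless, $V$ cannot be central, so $(a,b)\neq(0,0)$. If $a\neq0$, the last two relations form a system with determinant $-(a^2+b^2)\neq0$, so $q=r=0$, and $ap=0$ gives $p=0$. If instead $a=0$ (hence $b\neq0$), the relations give $q=r=0$, and solvability rules out $p\neq0$: for $p\neq0$ the derived algebra would be all of $\g$, making $\g$ simple. In every case one is left with $[X,Y]=0$ and $\ad_V|_{\H_e}$ conformal, which is precisely the metric Lie algebra of Example~\ref{ex:conformal} with $\alpha=a$, $\beta=b$ (the orthonormal frames matching). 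That example has constant sectional curvature $-a^2$, contradicting the assumption of non-constant curvature; hence $G$ must have constant sectional curvature.

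The routine parts are the Koszul and Jacobi manipulations, and the curvature itself is outsourced to Example~\ref{ex:conformal}. The step demanding the most care is the faithful translation of ``conformal foliation by geodesics'' into the pair of algebraic conditions on $\ad_V$, together with keeping track of how the \emph{centerless} and \emph{solvable} hypotheses are each used exactly once to collapse the classification to the single family of Example~\ref{ex:conformal}.
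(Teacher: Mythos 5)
Your proposal is correct and follows essentially the same route as the paper: extend $\V$ via Proposition~\ref{prop:foliation}, translate the geodesic and conformal conditions into $[V,X]=\alpha X+\beta Y$, $[V,Y]=-\beta X+\alpha Y$, use the centerless hypothesis to get $\alpha^2+\beta^2\neq 0$, establish $[X,Y]=0$, and conclude by comparison with Example~\ref{ex:conformal}. The only divergence is in the step $[X,Y]=0$: the paper notes that $X,Y\in[\g,\g]$, so by solvability $[\g,\g]=\mathrm{span}\{X,Y\}$ is a $2$-dimensional nilpotent, hence Abelian, algebra, whereas you reach the same conclusion more computationally from the Jacobi identity together with a short case analysis (solvability entering only to exclude $[X,Y]=pV$ with $p\neq 0$ when $\alpha=0$); both arguments are sound.
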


\begin{proof}
According to Proposition \ref{prop:foliation} the foliation
$\V$ can be extended to a global foliation on $G$ and is
tangent to a left-invariant vector field $V\in\g$.
Let $\H$ be the left-invariant distribution orthogonal to $\V$
and $X,Y$ be a left-invariant orthonormal basis for $\H$.
As $\V$ is totally geodesic,
we yield $$0=\ip{B^\V(V,V)}{X}=\ip{[X,V]}{V}$$ and similarly for
$Y$. We thus get
\begin{equation*}
\begin{split}
[V,X]=&\alpha X+\beta Y\\
[V,Y]=&\gamma X+\delta Y.
\end{split}
\end{equation*}
As $\V$ is conformal we
have$$0=\ip{B^\H(X,X)}{V}-\ip{B^\H(Y,Y)}{V}=\ip{[V,X]}{X}
-\ip{[V,Y]}{Y}=\alpha-\delta$$
and
\begin{equation*}
\begin{split}
0=&\ip{B^\H(X,Y)}{V}=\frac{1}{2}\big\{\ip{[V,X]}{Y}
+\ip{[X,Y]}{V}-\ip{[Y,V]}{X}\\
&+\ip{[V,Y]}{X}+\ip{[Y,X]}{V}-\ip{[X,V]}{Y}\big\}\\
=&\ip{[V,X]}{Y}+\ip{[V,Y]}{X}=\beta+\gamma.
\end{split}
\end{equation*}
Thus
\begin{equation}\label{eq:adjoint}
\begin{split}
[V,X]=&\alpha X+\beta Y\\
[V,Y]=&-\beta X+\alpha Y.
\end{split}
\end{equation}
The Lie algebra $\g$ is centerless so we must have
$\alpha^2+\beta^2\neq 0$, and since
\begin{equation*}
\begin{split}
[V,\beta X+\alpha Y]=&(\alpha^2+\beta^2)Y\\
[V,\alpha X-\beta Y]=&(\alpha^2+\beta^2)X,
\end{split}
\end{equation*}
it follows that $X,Y\in[\g,\g]$. As $\g$ is solvable, we must have
$[\g,\g]\neq \g$, so $[\g,\g]=\mathrm{span}\{X,Y\}$. Since
$[\g,\g]$ is a $2$-dimensional, nilpotent Lie algebra, it must be
Abelian; hence $[X,Y]=0$.

By comparing with Example \ref{ex:conformal}, it now follows that
$G$ must have constant sectional curvature $-\alpha^2$.
\end{proof}

\begin{example}
Consider the $3$-dimensional solvable Lie algebra $\g_\alpha$
spanned by the matrices
$$e_1=\begin{pmatrix} \alpha & 0 & 0 \\ 0 & -1 & 0 \\ 0 & 0 & 0
\end{pmatrix},\ e_2=\begin{pmatrix} 0 & 0 & 1
\\ 0 & 0 & 0 \\ 0 & 0 & 0 \end{pmatrix},\ e_3=\begin{pmatrix} 0 &
0 & 0 \\ 0 & 0 & -1 \\ 0 & 0 & 0 \end{pmatrix},$$ where $\alpha$
is some fixed real number. Here $[\g,\g]$ is spanned by $e_2$ and
$e_3$, and the operator $\ad_{e_1}$ acts on this space with eigenvalues
$\alpha$ and $-1$. The simply connected Lie group $G_\alpha$
with Lie algebra $\g_\alpha$ is given by
$$G_\alpha=\Bigg\{\begin{pmatrix} e^{\alpha x} & 0 & y \\ 0 &
e^{-x} & z \\ 0 & 0 & 1 \end{pmatrix}|\ x,y,z\in\rn\Bigg\}.$$

When $\alpha=0$, $e_2$ spans the center of $\g_\alpha$, and is
thus tangent to a Riemannian foliation of $G_\alpha$ by geodesics.
For $\alpha\neq0$, $\g_\alpha$ is centerless so the only metrics
on $G_\alpha$ for $\alpha\neq0$ which admit a conformal
foliation by geodesics are those of constant sectional curvature.
For $\alpha>0$ there are no left-invaraint metric on $G_\alpha$
with constant sectional curvature, see \cite{Mil}. This implies that
for $\alpha>0$ the Lie group $G_\alpha$ does not admit any local conformal
foliation by geodesics, regardelss of which left-invariant metric
we equip it with.
\end{example}

\section{Acknowledgements}
The authors are grateful to Vincente Cort\'es and Andrew Swann
for useful discussions on this work.

\end{document}